\documentclass[11pt,reqno]{amsart}
\usepackage{indentfirst, amssymb,amsmath,amsthm, mathrsfs,cite,graphicx,float}
\newtheorem*{theoA}{Theorem A}
\newtheorem*{theoB}{Theorem B}
\newtheorem*{theoC}{Theorem C}
\newtheorem*{theoD}{Theorem D}

\newtheorem*{theoF}{Theorem F}
\newtheorem*{theoG}{Theorem G}
\newtheorem*{theoH}{Theorem H}

\newtheorem*{theoJ}{Theorem J}

\newtheorem{theo}{Theorem}[section]
\newtheorem{lem}{Lemma}[section]

\newtheorem{cor}{Corollary}[section]
\newtheorem*{corA}{Corollary A}

\newtheorem{open problem}{Open problem}[section]
\newcommand{\pa}{\partial}
\newcommand{\ol}{\overline}
\newcommand{\be}{\begin{equation}}
\newcommand{\ee}{\end{equation}}
\newcommand{\bs}{\begin{small}}
\newcommand{\es}{\end{small}}
\newcommand{\beas}{\begin{eqnarray*}}
\newcommand{\eeas}{\end{eqnarray*}}
\newcommand{\bea}{\begin{eqnarray}}
\newcommand{\eea}{\end{eqnarray}}
\renewcommand{\epsilon}{\varepsilon}
\numberwithin{equation}{section}

\begin{document}
\title[Bohr phenomenon]{Bohr phenomenon for analytic and harmonic mappings on shifted disks}
\author[V. Allu, R. Biswas and R. Mandal]{ Vasudevarao Allu, Raju Biswas and Rajib Mandal}
\date{}
\address{Vasudevarao Allu, Department of Mathematics, School of Basic Sciences, Indian Institute of Technology Bhubaneswar, Bhubaneswar-752050, Odisha, India}
\email{avrao@iitbbs.ac.in}
\address{Raju Biswas, Department of Mathematics, Raiganj University, Raiganj, West Bengal-733134, India.}
\email{rajubiswasjanu02@gmail.com}
\address{Rajib Mandal, Department of Mathematics, Raiganj University, Raiganj, West Bengal-733134, India.}
\email{rajibmathresearch@gmail.com}
\let\thefootnote\relax
\footnotetext{2020 Mathematics Subject Classification: 30A10, 30H05, 30C35, 30C45, 30C50, 30C80.}
\footnotetext{Key words and phrases: Simply connected domain, bounded analytic functions, harmonic mappings, Bohr radius, improved Bohr radius, refined Bohr radius}
\begin{abstract} The primary objective of this paper is to establish several sharp results concerning the Bohr inequality, the refined Bohr inequality, and the improved Bohr inequality for the classes of analytic functions and harmonic mappings defined on the shifted disks 
\beas \Omega_{\gamma}=\left\{z\in\mathbb{C}:\left|z+\frac{\gamma}{1-\gamma}\right|<\frac{1}{1-\gamma}\right\}\quad\text{for}\quad\gamma\in[0,1).\eeas
\end{abstract}
\maketitle
\section{Introduction and Preliminaries}
Let $\mathbb{D}_{\rho}(a)=\{z:|z-a|<\rho\}$ and $\mathbb{D}_1(0):=\mathbb{D}$ the open unit disk in the complex plane $\mathbb{C}$. 
Let $\Omega$ be a simply connected domain, with $\mathbb{D}\subseteq \Omega$ and $\mathcal{H}(\Omega)$ denoting the class of analytic functions on $\Omega$.
Let $\mathcal{B}(\Omega)=\left\{f \in \mathcal{H}(\Omega): f(\Omega)\subseteq \ol{\mathbb{D}}\right\}$, and the Bohr radius (see \cite{10}) for the class $\mathcal{B}(\Omega)$ is defined to be the number $B_\Omega\in (0, 1)$ such 
that 
\beas B_\Omega=\sup\left\{\rho\in (0,1):M_f(\rho)\leq 1\;\text{for} \;f(z)=\sum_{n=0}^{\infty}\alpha_nz^n\in\mathcal{B}(\Omega),z\in\mathbb{D}\right\},\eeas 
where $M_f(\rho)=\sum_{n=0}^{\infty}|\alpha_n|\rho^n$
is the majorant series corresponding to the analytic functions $f\in\mathcal{B}(\Omega)$ in $\mathbb{D}$. It is well known that when $\Omega=\mathbb{D}$, $B_{\mathbb{D}} = 1/3$. This is described as follows:
\begin{theoA} (The Classical Bohr Theorem) If $f\in\mathcal{B}(\mathbb{D})$, then $M_f (\rho) \leq 1$ for $0 \leq \rho \leq 1/3$. The number $1/3$ is best possible.
\end{theoA}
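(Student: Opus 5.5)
The plan is to control every Taylor coefficient of $f$ by $1-|\alpha_0|^2$, sum the resulting majorant, and then use a Möbius automorphism of $\mathbb{D}$ to show that $1/3$ cannot be enlarged. Write $f(z)=\sum_{n\ge0}\alpha_nz^n$ with $|f|\le 1$ on $\mathbb{D}$, and set $a=|\alpha_0|\le 1$. If $a=1$, then $f$ is a unimodular constant by the maximum principle, and $M_f(\rho)=1$. So assume $a<1$.

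\textbf{Step 1 (coefficient bound).} I will prove Wiener's inequality $|\alpha_n|\le 1-a^2$ for all $n\ge1$. Fix $n\ge 1$ and let $\omega=e^{2\pi i/n}$. The function
\[ g(z)=\frac1n\sum_{k=0}^{n-1}f(\omega^k z) \]
has expansion $g(z)=\alpha_0+\alpha_nz^n+\alpha_{2n}z^{2n}+\cdots$, so $g(z)=h(z^n)$ with $h(w)=\alpha_0+\alpha_nw+\cdots$. Also $|h|\le1$ on $\mathbb{D}$, since $g$ is an average of functions of modulus at most $1$. Applying the Schwarz--Pick lemma at the origin gives $|h'(0)|\le 1-|h(0)|^2$, that is, $|\alpha_n|\le 1-a^2$. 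If instead $h$ is a unimodular constant, then $\alpha_n=0$ and the bound holds trivially.

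\textbf{Step 2 (summing the majorant).} For $0\le\rho\le1/3$, Step 1 gives
\[ M_f(\rho)\le a+(1-a^2)\frac{\rho}{1-\rho}\le a+\frac{1-a^2}{2}. \]
The right-hand side satisfies
\[ a+\frac{1-a^2}{2}=1-\frac{(1-a)^2}{2}\le 1, \]
which proves the inequality.

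\textbf{Step 3 (sharpness).} For $a\in(0,1)$, take the automorphism
\[ \phi_a(z)=\frac{a-z}{1-az}=a-(1-a^2)\sum_{n\ge1}a^{n-1}z^n. \]
Its majorant is
\[ M_{\phi_a}(\rho)=a+\frac{(1-a^2)\rho}{1-a\rho}. \]
The condition $M_{\phi_a}(\rho)>1$ is equivalent to $(1+a)\rho>1-a\rho$, that is, $\rho>1/(1+2a)$. Fix any $\rho>1/3$. Because $1/(1+2a)\to 1/3$ as $a\to1^-$, we can choose $a$ close enough to $1$ that $\rho>1/(1+2a)$, and then $M_{\phi_a}(\rho)>1$. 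Hence $1/3$ cannot be replaced by any larger number.

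The main obstacle is Step 1. A crude Cauchy estimate only gives $|\alpha_n|\le 1$, and that is too weak to reach the radius $1/3$. The rotation-averaging reduction to Schwarz--Pick is therefore the essential step of the argument.
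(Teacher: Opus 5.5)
Your proof is correct and follows essentially the paper's scheme. The paper only cites Theorem A, but your Step 3 is exactly its extremal function $\varphi_r(z)=(r-z)/(1-rz)$ with threshold $1/(1+2r)$, and your Steps 1--2 mirror the proof of Theorem \ref{th0} at $\gamma=0$: bound $|a_n|\le 1-|a_0|^2$, sum the geometric series, and check the result is $\le 1$. The only differences are that the paper takes the coefficient bound from Lemma \ref{le1} at $\alpha=0$ (via Lemma \ref{le2}) while you derive Wiener's inequality directly by rotation-averaging and Schwarz--Pick, and that you replace the paper's monotonicity analysis of $\xi(a)$ with the identity $a+\tfrac{1-a^2}{2}=1-\tfrac{(1-a)^2}{2}$.
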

It should be noted that for $\rho > 1/3$, the inequality $M_f (\rho) \leq 1$, where $f\in\mathcal{B}(\mathbb{D})$, does not hold.
To check this, we consider the function $\varphi_r(z)=(r-z)/(1-rz)$, $r\in[0,1)$. It is evident that $M_{\varphi_r}(\rho)>1$ if, and only if, $\rho>1/(1+2r)$.
This shows that $1/3$ is optimal for $r\to1^-$.\\[2mm]
\indent Actually, Bohr \cite{5} proved \textrm{Theorem A} for $\rho \leq 1/6$.
The optimal value $1/3$, which is known as the Bohr radius, was subsequently established as an independent result by Riesz, Schur, and Wiener \cite{5}. 
This outcome was also established by Sidon \cite{26} and Tomi\'c \cite{27}. For an in-depth study on the Bohr radius, we refer to \cite{1,2,305,302,303,301,210,211,212,304,3,6,7,8,9,10,11,306,307,300,13,14,15, 16,309,20,21,23,24,R1,ABM2025, ABM2024, BM2025} and its references.
Using the classical Bohr inequality, Dixon \cite{200} construct a Banach algebra that is not classified as an operator algebra but nevertheless satisfies the non-unital von Neumann's
 inequality. 
 Boas and Khavinson \cite{4} further developed the concept of Bohr radius, especially in the context of several complex variables, and introduced the notion of a 
multidimensional Bohr radius. Several researchers have followed up and extended this phenomenon in various settings, as shown in \cite{213,214,215}.\\[2mm]
For $\gamma\in[0,1)$, we consider the open disk $\Omega_{\gamma}$ defined by
\beas \Omega_{\gamma}=\left\{z\in\mathbb{C}:\left|z+\frac{\gamma}{1-\gamma}\right|<\frac{1}{1-\gamma}\right\}.\eeas
Note that, $\mathbb{D}\subseteq\Omega_{\gamma}$ for all $\gamma\in[0,1)$. 
Figure \ref{fig1} shows pictures of circles $C_\gamma : \left|z+\frac{\gamma}{1-\gamma}\right|=\frac{1}{1-\gamma}$ for different values of $\gamma \in [0, 1)$.
\begin{figure}[H]
\centering
\includegraphics[scale=1]{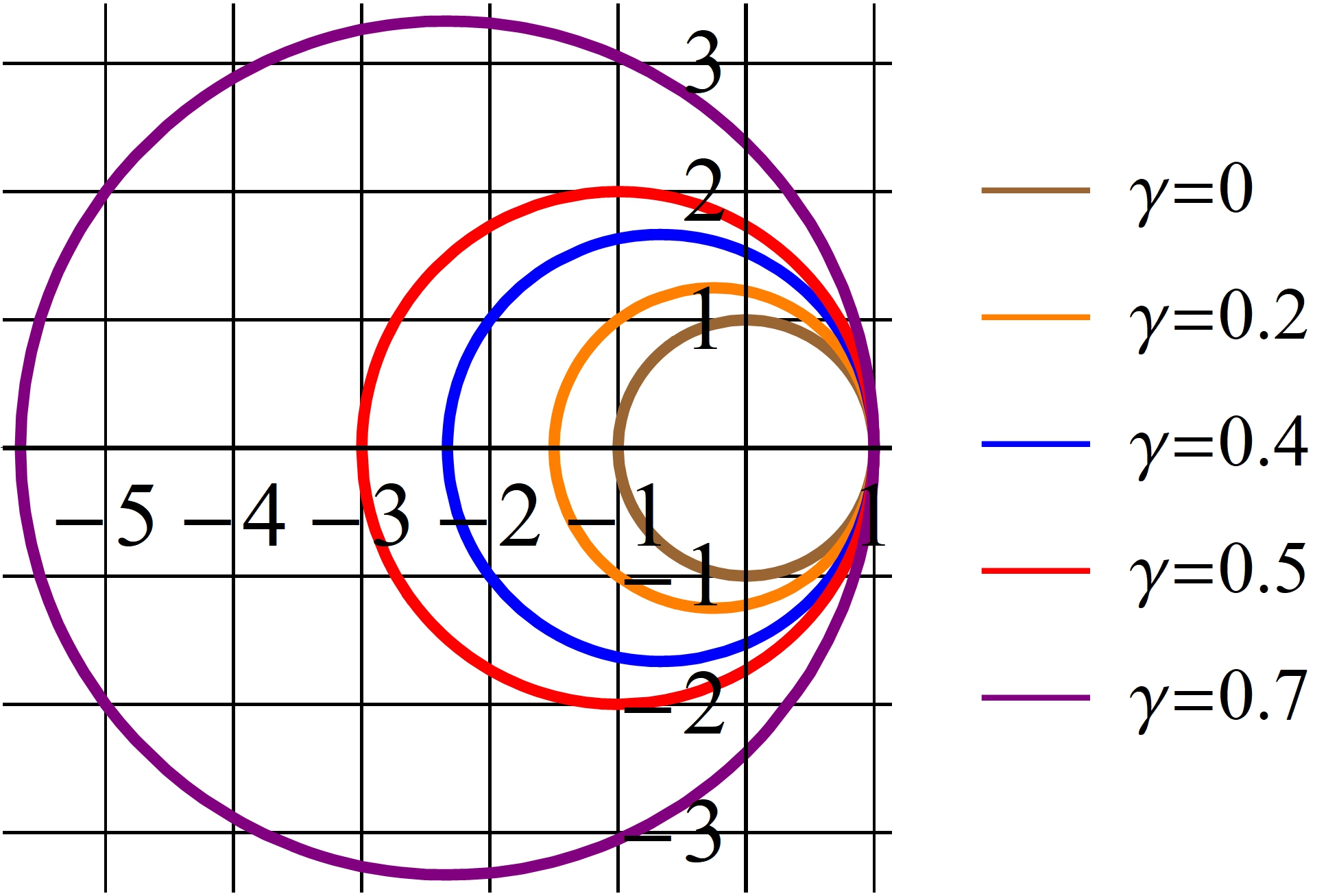}
\caption{The graphs of $C_{\gamma}$ when $\gamma=0,0.2,0.4,0.5,0.7$}
\label{fig1}
\end{figure}
In 2010, Fournier and Ruscheweyh \cite{10} extended the concept of Bohr's inequality as follows:
\begin{theoB}\cite[\textrm{Theorem 1}]{10} For $\gamma\in[0,1)$, let $f \in\mathcal{B}(\Omega_{\gamma})$ with $f(z)=\sum_{n=0}^{\infty} \alpha_nz^n$ for $z\in\mathbb{D}$. Then,
\beas \sum_{n=0}^{\infty} |\alpha_n|\rho^n\leq 1\;\;\text{ for}\;\; \rho \leq \rho_{\gamma} := \frac{1 + \gamma}{3 + \gamma} .\eeas
Also $\sum_{n=0}^{\infty} |\alpha_n|\rho_{\gamma}^n=1$ holds for a function $f(z)=\sum_{n=0}^{\infty} \alpha_nz^n$ in $\mathcal{B}(\Omega_{\gamma})$ if, and only if, $f(z)=c$ with $|c| = 1$.\end{theoB}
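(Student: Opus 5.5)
The plan is to transfer $f$ to the unit disk and use the sharp coefficient bounds available there. Write $\Omega_\gamma=\phi(\mathbb{D})$ with the affine map
\beas \phi(\zeta)=\frac{\zeta-\gamma}{1-\gamma},\qquad \phi^{-1}(z)=(1-\gamma)z+\gamma .\eeas
For $f\in\mathcal{B}(\Omega_\gamma)$, set $g=f\circ\phi$. Then $g\in\mathcal{B}(\mathbb{D})$, and
\beas f(z)=g\big(\gamma+(1-\gamma)z\big).\eeas
So $f$ is the restriction of a bounded analytic function on $\mathbb{D}$ to the smaller disk $\mathbb{D}_{1-\gamma}(\gamma)$, rescaled. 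Hence
\beas \alpha_n=\frac{f^{(n)}(0)}{n!}=(1-\gamma)^n\frac{g^{(n)}(\gamma)}{n!}.\eeas
The key coefficient estimate is the sharp Schwarz--Pick (Ruscheweyh) inequality for $g\in\mathcal{B}(\mathbb{D})$:
\beas \frac{|g^{(n)}(w)|}{n!}\leq\frac{(1+|w|)^{n-1}}{(1-|w|^2)^n}\big(1-|g(w)|^2\big),\qquad n\geq1 .\eeas
I will take this as known; alternatively it can be derived by composing $g$ with a disk automorphism sending $0$ to $w$ and using Wiener's bound $|b_n|\le 1-|b_0|^2$. With $w=\gamma$ it gives
\beas |\alpha_n|\leq (1-\gamma)^n\frac{(1+\gamma)^{n-1}}{(1-\gamma^2)^n}(1-a^2)=\frac{1-a^2}{1+\gamma},\qquad a:=|\alpha_0|=|f(0)| .\eeas

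Summing over $n\geq1$,
\beas \sum_{n=0}^\infty|\alpha_n|\rho^n\leq a+\frac{1-a^2}{1+\gamma}\cdot\frac{\rho}{1-\rho}.\eeas
At $\rho=\rho_\gamma=(1+\gamma)/(3+\gamma)$ we have $\rho/(1-\rho)=(1+\gamma)/2$, so the bound equals
\beas a+\frac{1-a^2}{2}=1-\frac{(1-a)^2}{2}\leq 1 .\eeas
The right-hand side is increasing in $\rho$, so the inequality holds for every $\rho\le\rho_\gamma$.

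For the equality statement, suppose equality holds at $\rho_\gamma$. The chain above forces $a=1$, so $|f(0)|=1$. Since $|f|\le 1$ on $\Omega_\gamma$ and $0$ is an interior point, the maximum modulus principle gives $f\equiv c$ with $|c|=1$. The converse is immediate.

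The main obstacle is justifying the coefficient bound at the non-central point $\gamma$ with the exact factor $(1+|w|)^{n-1}/(1-|w|^2)^n$, since any weaker estimate loses sharpness. If citing Ruscheweyh's inequality is not allowed, I would prove it by writing
\beas g\Big(\frac{\zeta+\gamma}{1+\gamma\zeta}\Big)=\sum_k b_k\zeta^k,\qquad |b_k|\le 1-|b_0|^2,\ b_0=g(\gamma),\eeas
and then re-expanding in powers of $z$ through the substitution $\zeta=(1-\gamma)z/\big(1-\gamma-\gamma(1-\gamma)z\big)$. Its power series has nonnegative coefficients, and summing the majorants should give exactly the factor $1/(1+\gamma)$.
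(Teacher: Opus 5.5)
Your proof is correct, and it is complete once Lemma \ref{le1} (Ruscheweyh's inequality, which the paper also takes as known) is granted. Your form of that bound is the same inequality, since $(1+|w|)^{n-1}/(1-|w|^2)^n=1/\big((1-|w|)^{n-1}(1-|w|^2)\big)$.

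The paper gives no proof of Theorem B; it is quoted from \cite{10}. Your argument is, however, exactly the mechanism of Lemma \ref{le2}: pull back by $\Phi(\zeta)=(\zeta-\gamma)/(1-\gamma)$ and apply Lemma \ref{le1}. The only difference is the base point.
\begin{itemize}
\item The paper expands about the center $-\gamma/(1-\gamma)$ of $\Omega_\gamma$ and applies Lemma \ref{le1} at $\alpha=0$. This gives $|a_n|\le(1-\gamma)^n(1-|a_0|^2)$, and hence the radius $1/3$ of Theorem \ref{th0}.
\item You expand about $z=0$ and apply it at $\alpha=\gamma$. The extra factor $1/(1+\gamma)$ in $|\alpha_n|\le(1-a^2)/(1+\gamma)$ is precisely what produces $\rho_\gamma$ in place of $1/3$. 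In general, $|\alpha_n|\le c(1-a^2)$ leads by the same computation to the radius $1/(1+2c)$.
\end{itemize}
Your equality argument is also fine. Once $a=1$, the coefficient bound already forces $\alpha_n=0$ for $n\ge1$, so the maximum principle is not even needed.

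There is one slip in your fallback derivation of the coefficient bound. For $w=\gamma+(1-\gamma)z$ one has $1-\gamma w=(1-\gamma)(1+\gamma-\gamma z)$. So the correct substitution is $\zeta=z/(1+\gamma-\gamma z)$, not $(1-\gamma)z/\big(1-\gamma-\gamma(1-\gamma)z\big)=z/(1-\gamma z)$.

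With the correct substitution, $\zeta$ has nonnegative coefficients and $\zeta/(1-\zeta)=z/\big((1+\gamma)(1-z)\big)$. Hence $|\alpha_n|$ is at most $(1-a^2)$ times the coefficient of $z^n$ in $\zeta/(1-\zeta)$, i.e.\ $(1-a^2)/(1+\gamma)$, as you predicted. Your formula would instead give $\zeta/(1-\zeta)=z/\big(1-(1+\gamma)z\big)$, and with it the useless factor $(1+\gamma)^{n-1}$.
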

In $2021$, Evdoridis {\it et al.}\cite{9a} obtained an improved version of \textrm{Theorem B} as follows:
\begin{theoC}\cite[Theorem 1]{9a} For $\gamma\in[0,1)$, let $f \in\mathcal{B}(\Omega_{\gamma})$ with $f(z)=\sum_{n=0}^{\infty} \alpha_nz^n$ for $z\in\mathbb{D}$. Then,
\beas\sum_{n=0}^{\infty} |\alpha_n|\rho^n+\frac{8}{9}\left(\frac{S_{\rho(1-\gamma)}}{\pi}\right)\leq 1\;\;\text{ for}\;\; \rho \leq \rho_{\gamma} := \frac{1 + \gamma}{3 + \gamma}, \eeas
where the area of the image of the disk $\mathbb{D}(0; \rho)$ under the mapping $f$ is denoted by $S_\rho$. Also, the inequality result is strict unless $f$ is a constant function. The numbers $8/9$ and $(1 + \gamma)/(3 + \gamma)$ cannot be replaced by a larger value.
\end{theoC}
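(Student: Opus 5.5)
We plan to reduce Theorem C to coefficient estimates for functions in $\mathcal{B}(\Omega_\gamma)$, combined with the standard area formula. Write $a=|\alpha_0|$.

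\textbf{Step 1: coefficient bounds.} Put $\phi_\gamma(z)=(1-\gamma)z+\gamma$. It maps $\mathbb{D}$ onto $\Omega_\gamma$, and the composition $g=f\circ\phi_\gamma$ belongs to $\mathcal{B}(\mathbb{D})$. Following Fournier and Ruscheweyh (the argument behind Theorem B), we apply the Schwarz--Pick/Wiener type estimates to $g$ and transfer them back to $f$. This gives
\[
|\alpha_n|\le \frac{1-a^2}{(1+\gamma)^{n-1}(1-\gamma)}\quad (n\ge1).
\]
An alternative route: $f(z)=g((z-\gamma)/(1-\gamma))$, and the Taylor coefficients of a bounded function at the interior point $\gamma/(\gamma-1)$ are controlled by $1-|g|^2$ weighted by the distance to the boundary. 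Summing the series then yields
\[
\sum_{n\ge1}|\alpha_n|\rho^n\le (1-a^2)\frac{\rho(1+\gamma)}{(1-\gamma)\bigl(1+\gamma-\rho\bigr)}.
\]
At $\rho=\rho_\gamma$ this becomes $(1-a^2)/2$, which is exactly the bound that proves Theorem B.

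\textbf{Step 2: the area term.} We have $S_r/\pi=\sum_{n\ge1}n|\alpha_n|^2r^{2n}$. A direct use of the bounds above is too crude, because we need the quadratic gain in $(1-a)$. So we would bound $S_{\rho(1-\gamma)}$ through $g$ instead. Since $\phi_\gamma^{-1}$ maps $\mathbb{D}(0;\rho(1-\gamma))$ into a disk inside $\mathbb{D}$, the quantity $S_{\rho(1-\gamma)}(f)$ is at most the area of $g$ over the disk $|w-c|<\rho$ with $c=-\gamma/(1-\gamma)$... The better choice is to use the known Dirichlet-type bound for $g\in\mathcal{B}(\mathbb{D})$:
\[
\sum n|b_n|^2 r^{2n}\le r^2\frac{(1-|b_0|^2)^2}{(1-|b_0|^2r^2)^2}.
\]
This is attained by disk automorphisms. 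Combining it with the relation between the coefficients gives
\[
\frac{S_{\rho(1-\gamma)}}{\pi}\le \frac{\rho^2(1-a^2)^2}{(1-a^2\rho^2)^2}\cdot C_\gamma,
\]
and we would aim to arrange matters so that $C_\gamma\le 1$ at $\rho\le\rho_\gamma$.

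\textbf{Step 3: combining.} For $\rho\le\rho_\gamma$ the left side of Theorem C is at most
\[
a+\frac{1-a^2}{2}+\frac89\cdot\frac{\rho_\gamma^2(1-a^2)^2}{(1-a^2\rho_\gamma^2)^2}.
\]
We then need to check that this is at most $1$, that is,
\[
\frac{8}{9}\cdot\frac{\rho_\gamma^2(1+a)^2(1-a)}{(1-a^2\rho_\gamma^2)^2}\le\frac{1-a}{2}.
\]
Since $\rho_\gamma\le 1/2$ and the constant is tuned to the worst case, this should be a one-variable inequality in $a\in[0,1)$, strict for $a<1$. Strictness for nonconstant $f$ follows because every step is strict unless $a=1$.

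\textbf{Sharpness.} We take $f$ equal to the automorphism $\varphi_r$ composed with $\phi_\gamma^{-1}$ and let $r\to1^-$. Expanding to first order in $1-a$ shows that both constants $8/9$ and $\rho_\gamma$ are optimal.

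\textbf{Main obstacle.} The main obstacle is Step 2, the correct area estimate on the shifted disk, where the factor $(1-\gamma)$ inside $S$ must exactly cancel the geometry. I would first try to prove it via subordination of $f$ restricted to $\mathbb{D}$ and the Goluzin-type inequality for $g$.
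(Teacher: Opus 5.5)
The paper only quotes Theorem C from \cite{9a} and gives no proof of it, so your plan has to stand on its own. It does not: both quantitative steps fail.

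In Step 1, the map $\phi_\gamma(z)=(1-\gamma)z+\gamma$ sends $\Omega_\gamma$ onto $\mathbb{D}$, not the reverse. The relevant disk function is therefore $g(w)=f((w-\gamma)/(1-\gamma))$, with $\alpha_n=(1-\gamma)^n g^{(n)}(\gamma)/n!$. So the coefficients of $f$ at $0$ are those of $g$ at $\gamma$. The point $\gamma/(\gamma-1)$ in your alternative route is the centre of $\Omega_\gamma$, which is the base point of Lemma \ref{le2}, not of Theorem C.

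Your bound $|\alpha_n|\le(1-a^2)/((1+\gamma)^{n-1}(1-\gamma))$ is false. For $f=\varphi_r\circ\phi_\gamma$ one gets $|\alpha_n|=\frac{1-a^2}{1+\gamma}s^{n-1}$ with $s=\frac{r(1-\gamma)}{1-r\gamma}\to1$. This exceeds your bound once $(1+\gamma)^{n-2}(1-\gamma)>1$, e.g.\ $\gamma=1/2$, $n=4$. Even taken at face value, its sum at $\rho_\gamma$ is $\frac{(1+\gamma)(1-a^2)}{(1-\gamma)(2+\gamma)}$, not $\frac{1-a^2}{2}$, so it would not even give Theorem B for $\gamma>0$. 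The correct input is Lemma \ref{le1} at $\alpha=\gamma$. It gives $|\alpha_n|\le\frac{1-a^2}{1+\gamma}$ for all $n\ge1$, hence $\sum_{n\ge1}|\alpha_n|\rho^n\le\frac{(1-a^2)\rho}{(1+\gamma)(1-\rho)}\le\frac{1-a^2}{2}$ for $\rho\le\rho_\gamma$.

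Step 2 is only a placeholder. The Goluzin/Dirichlet bound concerns expansions at $0$ over disks centred at $0$. But $S_{\rho(1-\gamma)}$ is the area of $g$ over $\{|w-\gamma|<\rho(1-\gamma)^2\}$, so that bound does not transfer by rescaling coefficients. Worse, your target $C_\gamma\le1$ is not enough. With $C_\gamma=1$, Step 3 becomes $\frac{64\rho_\gamma^2}{9(1-\rho_\gamma^2)^2}\le1$ as $a\to1$. This holds only if $\rho_\gamma\le 1/3$, i.e.\ only for $\gamma=0$. The bad case is $\gamma\to1$, not the case $8/9$ is tuned to.

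The estimate you dismissed as too crude is in fact exactly what proves the inequality. Squaring $|\alpha_n|\le\frac{1-a^2}{1+\gamma}$ already supplies the factor $(1-a^2)^2$, and the scaling $\rho(1-\gamma)$ does the rest. Put $t=\rho(1-\gamma)$. For $\rho\le\rho_\gamma$ one has $t\le\frac13$ and $\frac{t}{1+\gamma}\le\frac{1-\gamma}{3+\gamma}\le\frac13$, so
\[
\frac{S_{\rho(1-\gamma)}}{\pi}=\sum_{n\ge1}n|\alpha_n|^2t^{2n}\le\frac{(1-a^2)^2t^2}{(1+\gamma)^2(1-t^2)^2}\le\frac{9}{64}(1-a^2)^2 .
\]
Hence the left side of Theorem C is at most
\[
a+\frac{1-a^2}{2}+\frac{(1-a^2)^2}{8}=1-\frac{(1-a)^3(3+a)}{8}.
\]
This is $<1$ unless $a=1$, and then $f$ is constant.

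Your sharpness argument also fails. The area term is $O((1-a)^2)$, so a first-order expansion in $1-a$ says nothing about its constant. At second order, your family has linear deficit exactly $\frac{2(1-a)^2}{3-a}$ at $\rho_\gamma$, for every $\gamma$. This is asymptotically twice the $\frac{(1-a)^2}{2}$ that the crude bound guarantees. So the family only rules out constants above $1/(4Q_\gamma)\ge 16/9$, where $Q_\gamma=\frac{t^2}{(1+\gamma)^2(1-t^2)^2}$ at $t=\rho_\gamma(1-\gamma)$, and $Q_\gamma\le\frac{9}{64}$.

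Indeed, at $\gamma=0$ Kayumov and Ponnusamy proved the inequality with constant $16/9$. So the optimality of $8/9$ asserted in the statement cannot be established there. With your family, only the sharpness of $\rho_\gamma$ goes through, since the area term is nonnegative.
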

For an analytic function $f(z)=\sum_{n=0}^{\infty} \alpha_nz^n$ for $z\in\mathbb{D}$, we write
\beas \left\Vert f_0\right\Vert_\rho=\sum_{n=1}^{\infty} |\alpha_n|^2\rho^{2n},\quad\text{where}\quad f_0(z)=f(z)-f(0).\eeas 
In 2021, Evdoridis {\it et al}. {\cite{9a}} established the following result, which is a refinement of \textrm{Theorem B} for the class $\mathcal{B}(\Omega_{\gamma})$ with its restriction to the unit disk $\mathbb{D}$.
\begin{theoD}\cite[Theorem 2]{9a} For $\gamma\in[0,1)$, let $f \in\mathcal{B}(\Omega_{\gamma})$ with $f(z)=\sum_{n=0}^{\infty} \alpha_nz^n$ for $z\in\mathbb{D}$. Then,
\beas \sum_{n=0}^\infty |\alpha_n|\rho^n+\left(\frac{1}{1+|\alpha_0|}+\frac{\rho}{1-\rho}\right) ||f_0||_\rho\leq 1\;\;\text{ for}\;\; \rho \leq \rho_0:= \rho_{\gamma} := \frac{1 + \gamma}{3 + \gamma}, \eeas
and the number $\rho_0$ cannot be improved.\end{theoD}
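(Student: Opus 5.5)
The natural route is to pull everything back to the unit disk through the affine map that carries $\mathbb{D}$ onto $\Omega_\gamma$. Put $g(w)=f\big((w-\gamma)/(1-\gamma)\big)$ for $w\in\mathbb{D}$. Since $z=(w-\gamma)/(1-\gamma)$ ranges over $\Omega_\gamma$ exactly when $w\in\mathbb{D}$, we have $g\in\mathcal{B}(\mathbb{D})$. Conversely, $f(z)=g(\gamma+(1-\gamma)z)$.

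\textbf{Step 1: coefficient bounds.} Write $g(w)=\sum b_k(w-\gamma)^k$ around $w=\gamma$, so that $\alpha_n=b_n(1-\gamma)^n$. The Schwarz--Pick type estimate (Ruscheweyh) gives
\[
|g^{(n)}(\gamma)|/n!\le (1-|g(\gamma)|^2)(1+\gamma)^{n-1}/(1-\gamma^2)^n .
\]
Since $|\alpha_0|=|g(\gamma)|=:a$, this yields
\[
|\alpha_n|\le (1-a^2)(1+\gamma)^{n-1}(1-\gamma)^n/(1-\gamma^2)^n=\frac{1-a^2}{1+\gamma}\Big(\frac{1}{1+\gamma}\Big)^{n-1}\quad (n\ge1).
\]
This is the estimate Fournier--Ruscheweyh use to prove Theorem B.

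\textbf{Step 2: bound the $\|f_0\|_\rho$ term.} Using $|\alpha_n|^2\le|\alpha_n|\cdot\frac{1-a^2}{1+\gamma}(1+\gamma)^{-(n-1)}$ turns $\|f_0\|_\rho$ into the linear sum $\sum|\alpha_n|\rho^n$ weighted by a factor. It is cleaner, though, to substitute the bound directly into every term. With $t=\rho/(1+\gamma)$ and $A=1-a^2$,
\[
\sum_{n\ge1}|\alpha_n|\rho^n\le A\,\frac{\rho}{1+\gamma-\rho},\qquad
\|f_0\|_\rho\le \frac{A^2}{(1+\gamma)^2}\,\frac{\rho^2}{1-t^2}.
\]
So the left side is at most
\[
a+A\frac{\rho}{1+\gamma-\rho}+\Big(\frac{1}{1+a}+\frac{\rho}{1-\rho}\Big)\frac{A^2\rho^2}{(1+\gamma)^2-\rho^2}=:\Phi(a,\rho).
\]
$\Phi$ is increasing in $\rho$, so it suffices to show $\Phi(a,\rho_\gamma)\le1$ for all $a\in[0,1)$.

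\textbf{Step 3: check at $\rho=\rho_\gamma$.} At $\rho_\gamma=(1+\gamma)/(3+\gamma)$ we have $1+\gamma-\rho_\gamma=(1+\gamma)(2+\gamma)/(3+\gamma)$, so the linear part equals $a+(1-a^2)/(2+\gamma)$. Subtracting from $1$ and dividing by $(1-a)$ reduces the claim to
\[
1-\frac{1+a}{2+\gamma}\ \ge\ (1+a)(1-a)\Big(\frac{1}{1+a}+\frac{\rho_\gamma}{1-\rho_\gamma}\Big)\frac{\rho_\gamma^2}{(1+\gamma)^2-\rho_\gamma^2}.
\]
The left side is $(1+\gamma-a)/(2+\gamma)$, which is positive but tends to $\gamma/(2+\gamma)$ as $a\to1$, while the right side vanishes like $(1-a)$. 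This is the main obstacle: the inequality must hold uniformly in $a$ and $\gamma$, and at $\gamma=0$, $a\to1$ both sides tend to $0$ and a first-order comparison is needed. I expect the crude bound $|\alpha_n|^2\le(\dots)^2$ may be too lossy. If so, I would instead use the sharper Bombieri-type estimate applied to $g$: the Schur/Parseval inequality $\sum|b_k|^2$-type bound, i.e. refined Bohr inequalities for $g$ at the point $\gamma$ via the standard lemma
\[
\sum_{n\ge1}|c_n|r^n+\Big(\frac{1}{1+|c_0|}+\frac{r}{1-r}\Big)\sum_{n\ge1}|c_n|^2r^{2n}\le (1-|c_0|^2)\frac{r}{1-r}+\dots,
\]
where the left side is $\le |c_0|+\dots$. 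This lemma is proved by writing $|c_n|\le 1-|c_0|^2$ and using the identity $\sum|c_n|r^n+\frac{1}{1+|c_0|}\sum|c_n|^2r^{2n}$ telescoping to $\frac{r(1-|c_0|^2)}{1-r}$ minus a positive square. I would then adapt it with weights $(1+\gamma)^{-(n-1)}$, which is essentially the $\gamma$-version of the refined Bohr lemma of Ponnusamy et al.

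\textbf{Step 4: sharpness.} Take $f$ induced by $g=\varphi_r$ with $r\to1^-$. Its coefficients attain equality in Step 1. Expanding the left side for $\rho>\rho_\gamma$ shows it exceeds $1$ as $r\to1$, because already the linear part (Theorem B's extremal) exceeds $1$, while the added term is nonnegative. Hence $\rho_0$ cannot be improved.
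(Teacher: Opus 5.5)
\textbf{There is a genuine gap: the coefficient estimate in Step 1 is false, and Steps 2--4 rest on it.}

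\textbf{The algebra slip.} Since $(1-\gamma^2)^n=(1-\gamma)^n(1+\gamma)^n$, Ruscheweyh's estimate gives
\[
|\alpha_n|\le (1-a^2)\frac{(1+\gamma)^{n-1}(1-\gamma)^n}{(1-\gamma)^n(1+\gamma)^n}=\frac{1-a^2}{1+\gamma}\qquad(n\ge 1).
\]
There is no geometric decay in $n$.

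\textbf{Why your decaying bound cannot hold.}
\begin{itemize}
\item It would give $a+(1-a^2)\rho/(1+\gamma-\rho)\le 1$ for all $\rho\le(1+\gamma)/3$. This contradicts the sharpness of $(1+\gamma)/(3+\gamma)$ in Theorem B.
\item Your own extremal $f(z)=\varphi_r(\gamma+(1-\gamma)z)$ violates it for $\gamma>0$. Its coefficients are $|\alpha_n|=\frac{1-|\alpha_0|^2}{1+\gamma}q^{n-1}$ with $q=r(1-\gamma)/(1-r\gamma)\to 1$. So your remark in Step 4 that this function attains equality in Step 1 is also wrong.
\item The sharpness conclusion itself still stands: as $r\to1$ the linear part alone exceeds $1$ for $\rho>\rho_\gamma$, and the added term is nonnegative.
\end{itemize}

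\textbf{Consequences for Steps 2--3.} You are working with a false majorant. The slack $(1+\gamma-a)/(2+\gamma)$ you found near $a=1$ is an artifact of the slip. Your fallback, a weighted refined-Bohr lemma with weights $(1+\gamma)^{-(n-1)}$, rests on the same false decay and is not carried out. So as written the inequality is not proved.

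\textbf{The fix.} Redo Steps 2--3 with the correct bound. The direct substitution you feared was too lossy is in fact exactly tight. At $\rho=\rho_\gamma$ one has
\[
\frac{\rho_\gamma}{(1+\gamma)(1-\rho_\gamma)}=\frac12
\qquad\text{and}\qquad
\frac{\rho_\gamma^2}{(1+\gamma)^2(1-\rho_\gamma^2)}=\frac{1}{4(2+\gamma)}.
\]
The linear part is then $a+(1-a^2)/2=1-(1-a)^2/2$. The claim reduces to
\[
\left(\frac{1}{1+a}+\frac{1+\gamma}{2}\right)\frac{(1-a^2)^2}{4(2+\gamma)}\le\frac{(1-a)^2}{2},
\quad\text{i.e.}\quad (1+a)+\frac{(1+\gamma)(1+a)^2}{2}\le 2(2+\gamma).
\]
This holds for all $a\in[0,1]$, with equality only at $a=1$. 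Every term of the majorant is increasing in $\rho$, which handles $\rho<\rho_\gamma$.

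So the decisive comparison is second order in $1-a$ for every $\gamma$, not only at $\gamma=0$. The paper only cites Theorem D and does not prove it. This corrected argument is the same scheme the paper uses for its shifted-disk analogue, Theorem \ref{th0}: substitute the Lemma \ref{le2} coefficient bound into every term, then analyse the resulting one-variable function of $a$.
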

Let $f = u + iv$ be a complex-valued function in a simply connected domain $\Omega$. 
If $f$ satisfies the Laplace equation $\Delta f = 4f_{z\ol z} = 0$, then it is harmonic in $\Omega$, {\it i.e.}, $u$ and $v$ are real harmonic in $\Omega$. 
It should be noted that the canonical representation of every harmonic mapping $f$ is $f = h + \ol{g}$, where $h$ and $g$ are analytic in $\Omega$. This representation is unique 
up to an additive constant (see \cite{20P}). 
The inverse function theorem, as well as a result established by Lewy \cite{18}, indicates that a harmonic function $f$ is locally univalent within the domain $\Omega$ if, and only if, 
the Jacobian of f, defined by $J_f(z)=|h'(z) |^2-|g'(z) |^2$, retains a non-zero value in $\Omega$.
A locally univalent function $f$ is sense-preserving when $J_f (z) > 0$ in $\Omega$. Thus, a harmonic mapping $f$ is locally univalent and sense-preserving
in $\Omega$ if, and only if, $J_f (z) > 0$ in $\Omega$.
This is equivalent to the conditions that $h'\not=0$ in $\Omega$ and the dilatation $\omega_f:=g'/h'$ of $f$ has the property that $|\omega_f| < 1$ in $\Omega$ (see \cite{8a,20P,18}).\\[2mm]
\indent A locally univalent and sense-preserving harmonic mapping $f = h + \ol{g}$ on a domain $\Omega$ is said to be a $K$-quasiconformal harmonic mapping if it satisfies the condition $|\omega_f (z)|\le k < 1$ for all $z\in\Omega$, where $K = (1+k)/(1-k) \geq 1$ (see \cite{12,22}). 
It is evident that the limit of $k$ approaching $1$ correlates with the scenario where $K$ is approaching $+\infty$. 
The results on harmonic extensions of the classical Bohr theorem have been studied in \cite{9,15,17,20,21}.\\[2mm]
For a harmonic mapping in $\Omega_{\gamma}$, Evdoridis {\it et al.} \cite{9a} studied the following and obtained the Bohr inequality for its restriction to $\mathbb{D}$.
\begin{theoF}\cite{9a} Let $f = h + \ol g$ be a harmonic mapping in $\Omega_{\gamma}$, with $|h(z)| \leq 1$ on $\Omega_{\gamma}$. If $h(z) = \sum_{n=0}^{\infty} \alpha_nz^n$ and $g(z) = \sum_{n=1}^{\infty} \beta_nz^n$ for $z\in\mathbb{D}$ and $|g'(z)| \leq k|h'(z)|$ for some $k \in [0, 1]$, then 
\beas \sum_{n=0}^\infty|\alpha_n|\rho^n+\sum_{n=1}^\infty|\beta_n|\rho^n\leq 1\quad \text{for}\quad \rho\leq \rho_0:=\frac{1+\gamma}{3+2k+\gamma}.\eeas
The radius $\rho_0$ is the best possible.\end{theoF}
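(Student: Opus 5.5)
The proof has three parts: coefficient bounds for $h$ on $\Omega_\gamma$, a transfer of these bounds to $g$ via $|g'|\le k|h'|$, and a sharpness construction.

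\textbf{Bounds for $h$.} The standing tool for $\mathcal{B}(\Omega_\gamma)$ is the coefficient estimate of Fournier and Ruscheweyh behind Theorem B:
\[
|\alpha_n|\le \frac{1-|\alpha_0|^2}{1+\gamma}\quad\text{for } n\ge1 .
\]
We obtain it by composing. The map $\phi(z)=(1-\gamma)z+\gamma$ sends $\mathbb{D}$ onto $\Omega_\gamma$, so $F=h\circ\phi\in\mathcal{B}(\mathbb{D})$. Wiener-type estimates for $F$ combined with the subordination structure then give the bound. Since $\mathbb{D}\subseteq\Omega_\gamma$, the estimate applies directly to the coefficients $\alpha_n$ of $h$ on $\mathbb{D}$. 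Writing $a=|\alpha_0|$, we get
\[
\sum_{n\ge0}|\alpha_n|\rho^n\le a+\frac{1-a^2}{1+\gamma}\cdot\frac{\rho}{1-\rho}.
\]

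\textbf{Transfer to $g$.} From $|g'(z)|\le k|h'(z)|$ in $\Omega_\gamma$ we get $g'=\omega h'$ with $|\omega|\le k$. In $\mathbb{D}$ this means $g'\prec k h'$ in the coefficient-majorant sense, and the standard lemma for the unit disk gives
\[
\sum_{n\ge1} n^2|\beta_n|^2 r^{2n}\le k^2\sum_{n\ge1} n^2|\alpha_n|^2 r^{2n}.
\]
Combining this with Cauchy–Schwarz, or more simply applying the analogue of the coefficient bound to $g$ itself, we aim for
\[
|\beta_n|\le k\,\frac{1-a^2}{1+\gamma}\quad (n\ge1).
\]
Proving this pointwise bound on the $\beta_n$ is the main obstacle. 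The first attempt is to apply the same composition argument to $g'$. If that fails, we use the Cauchy–Schwarz route: with $\sum n^2|\beta_n|^2\rho^{2n}\le k^2\sum n^2|\alpha_n|^2\rho^{2n}$, it gives
\[
\sum|\beta_n|\rho^n\le \Bigl(\sum n^2|\beta_n|^2\rho^{2n}\Bigr)^{1/2}\Bigl(\sum \rho^{2n}/n^2\Bigr)^{1/2}
\]
together with a comparable bound for the $\alpha_n$. This is the delicate step, and it is possible that the pointwise bound on the $\beta_n$ holds only in summed form. In either case the result is
\[
\sum_{n\ge0}|\alpha_n|\rho^n+\sum_{n\ge1}|\beta_n|\rho^n\le a+(1+k)\frac{1-a^2}{1+\gamma}\cdot\frac{\rho}{1-\rho}.
\]

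\textbf{Conclusion and sharpness.} The right-hand side is at most $1$ exactly when $(1+k)(1+a)\rho\le(1+\gamma)(1-\rho)$. The worst case is $a\to1$, which gives $\rho\le (1+\gamma)/(3+2k+\gamma)$.

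For sharpness, take $h=\varphi_a\circ\psi$, where $\psi$ is the inverse of $\phi$, and take $g'=k\lambda h'$ with $|\lambda|=1$ chosen so that the coefficient signs align. Expanding both in $\mathbb{D}$ and letting $a\to1^-$ shows that the sum exceeds $1$ for every $\rho>\rho_0$.
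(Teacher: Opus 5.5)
\textbf{Main gap.} The paper quotes Theorem F without proof, but its proof of the analogous Theorem \ref{th1} shows the intended route. Your proposal has a genuine gap exactly at the step you call delicate: the bound
\[
\sum_{n\ge1}|\beta_n|\rho^n\le k\,\frac{1-a^2}{1+\gamma}\cdot\frac{\rho}{1-\rho}
\]
is never established.

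The Cauchy--Schwarz inequality you write is false. The factors $n|\beta_n|\rho^n$ and $\rho^n/n$ multiply to $|\beta_n|\rho^{2n}$, not $|\beta_n|\rho^n$. Already for $g(z)=\beta_1z$ your right-hand side is $|\beta_1|\rho\,(\sum_n\rho^{2n}/n^2)^{1/2}<|\beta_1|\rho$.

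A correct split $|\beta_n|\rho^n=(n|\beta_n|r^n)\cdot\rho^n/(nr^n)$ of the $n^2$-weighted inequality, followed by $|\alpha_n|\le(1-a^2)/(1+\gamma)$, produces the factor $\bigl(\sum n^2r^{2n}\bigr)^{1/2}\bigl(\sum(\rho/r)^{2n}/n^2\bigr)^{1/2}$. This is strictly larger than $\sum\rho^n=\rho/(1-\rho)$ for every admissible $r$, since equality would force $n^2r^{2n}\rho^{-n}$ to be independent of $n$. So for $k>0$ this route only yields a radius strictly smaller than $\rho_0$.

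The pointwise bound $|\beta_n|\le k(1-a^2)/(1+\gamma)$ is left unproved. The Fournier--Ruscheweyh estimate cannot simply be applied to $g$, which is not known to be bounded. It is also not needed.

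The missing idea is to lower the weight before using Cauchy--Schwarz. Integrate your circle inequality $\sum n^2|\beta_n|^2r^{2n-2}\le k^2\sum n^2|\alpha_n|^2r^{2n-2}$ against $r\,dr$ over $(0,s)$, then integrate the result against $ds/s$ over $(0,\sqrt{\rho})$. This gives $\sum|\beta_n|^2\rho^n\le k^2\sum|\alpha_n|^2\rho^n$, which is Lemma \ref{le3}. Then $\sum|\beta_n|\rho^n\le\bigl(\sum|\beta_n|^2\rho^n\bigr)^{1/2}\bigl(\sum\rho^n\bigr)^{1/2}\le k\frac{1-a^2}{1+\gamma}\frac{\rho}{1-\rho}$, exactly as in (\ref{a2}). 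From there your concluding computation with $a\to1$ goes through.

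\textbf{Direction of the affine map.} You also have the map backwards, and this breaks your sharpness example. The map $\phi(z)=(1-\gamma)z+\gamma$ sends $\Omega_\gamma$ onto $\mathbb{D}$, not $\mathbb{D}$ onto $\Omega_\gamma$. So $h\circ\phi$ only involves $h$ on the small disk $\phi(\mathbb{D})\subset\mathbb{D}$ and never uses the hypothesis on $\Omega_\gamma$.

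The function to use is $F(w)=h((w-\gamma)/(1-\gamma))\in\mathcal{B}(\mathbb{D})$, for which $F(\gamma)=\alpha_0$ and $\alpha_n=(1-\gamma)^nF^{(n)}(\gamma)/n!$. Ruscheweyh's estimate at the point $\gamma$ (Lemma \ref{le1}), rather than a Wiener estimate at the origin, then gives $|\alpha_n|\le(1-\gamma)(1-|\alpha_0|^2)/(1-\gamma^2)=(1-|\alpha_0|^2)/(1+\gamma)$. Since you cite this bound as known, that part is repairable.

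Your extremal function is not repairable as stated: for $\gamma>0$, $\varphi_a\circ\phi^{-1}$ is not bounded by $1$ on $\Omega_\gamma$. Take instead $h(z)=\varphi_t(\gamma+(1-\gamma)z)$ with $\varphi_t(w)=(t-w)/(1-tw)$, and $g=k(h-h(0))$. Then $\beta_n=k\alpha_n$ and $|\alpha_n|=(1-\gamma)(1-t^2)c^{n-1}/(1-t\gamma)^2$ with $c=t(1-\gamma)/(1-t\gamma)$. The sum minus $1$ equals $(1-t)$ times a quantity tending, as $t\to1^-$, to $\frac{1}{1-\gamma}\bigl(\frac{2(1+k)\rho}{1-\rho}-(1+\gamma)\bigr)$. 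This is positive for $\rho>\rho_0$.
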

\begin{corA}\cite{9a} Let $f = h + \ol g$ be a harmonic mapping in $\Omega_{\gamma}$, with $|h(z)| \leq 1$ on $\Omega_{\gamma}$. If $h(z) = \sum_{n=0}^{\infty} \alpha_nz^n$ and $g(z) = \sum_{n=1}^{\infty} \beta_nz^n$ for $z\in\mathbb{D}$ and $f=h+\ol g$ is sense-preserving in $\mathbb{D}$, then
\beas \sum_{n=0}^\infty|\alpha_n|\rho^n+\sum_{n=1}^\infty|\beta_n|\rho^n\leq 1\quad \text{for}\quad\rho\leq \rho_0:=\frac{1+\gamma}{5+\gamma}.\eeas
The radius $\rho_0$ is the best possible.
\end{corA}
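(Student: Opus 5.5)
The natural route is to derive Corollary A from Theorem F by letting $k\to1^-$. The two parts are the inequality and its sharpness.

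For the inequality, note first that sense-preservation of $f=h+\ol g$ in $\mathbb{D}$ means $|g'(z)|<|h'(z)|$ there, i.e.\ $|g'(z)|\le k|h'(z)|$ with $k=1$, at least on $\mathbb{D}$. Theorem F is stated with the hypothesis on $\Omega_\gamma$. However, its proof (as in \cite{9a}) only uses the dilatation bound through the coefficient estimate on $\mathbb{D}$. So I would re-run that argument rather than cite it blindly. It has three steps.

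First, write $\omega=g'/h'$. Then $\omega$ is analytic in $\mathbb{D}$ with $|\omega|\le1$, so $g'=\omega h'$ is subordinate-type dominated by $h'$. This gives the majorant estimate
\[
\sum_{n\ge1} n^2|\beta_n|^2\rho^{2n}\le \sum_{n\ge1} n^2|\alpha_n|^2\rho^{2n}.
\]
Integrating, by Cauchy--Schwarz with weights $n\rho^{n}$ (the standard Kayumov--Ponnusamy trick), we get $\sum|\beta_n|\rho^n\le\sum_{n\ge1}|\alpha_n|\rho^n$ for $\rho\le\rho_0$, possibly in a weakened form that suffices.

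Second, we use the sharp coefficient bound for $h\in\mathcal{B}(\Omega_\gamma)$ from the proof of Theorem B, namely
\[
|\alpha_n|\le \frac{1-|\alpha_0|^2}{1+\gamma}\quad(n\ge1).
\]
This follows by composing $h$ with the map $z\mapsto(1-\gamma)z+\gamma$ of $\mathbb{D}$ onto $\Omega_\gamma$, together with the Fournier--Ruscheweyh argument.

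Third, with $a=|\alpha_0|$, the left side is then at most
\[
a+\frac{2(1-a^2)}{1+\gamma}\,\frac{\rho}{1-\rho}.
\]
This is $\le1$ iff $\frac{2(1+a)\rho}{(1+\gamma)(1-\rho)}\le1$. The worst case is $a\to1$, which gives exactly $\rho\le(1+\gamma)/(5+\gamma)$. Alternatively, and more simply, once the Theorem F proof is seen to need the hypothesis only in $\mathbb{D}$, we can take $k=1$ directly: $\rho_0=(1+\gamma)/(3+2+\gamma)$.

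For sharpness, take $h(z)=\varphi_a\big((1-\gamma)z+\gamma\big)$-type extremal functions, i.e.\ $h=(a-\psi)/(1-a\psi)$ with $\psi(z)=(1-\gamma)z+\gamma$ up to normalization, and $g'=\lambda h'$ with $|\lambda|=1$, so $g=\lambda(h-h(0))$. This $f$ is sense-preserving only in the limit $|\lambda|\to1$. The sum becomes $a+2\sum_{n\ge1}|\alpha_n|\rho^n$, which exceeds $1$ for $\rho>(1+\gamma)/(5+\gamma)$ as $a\to1$.

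The main obstacle is the first step: verifying that Theorem F's proof survives with $k=1$ and the dilatation hypothesis only on $\mathbb{D}$. For sharpness, the issue is that the strict inequality $|\omega|<1$ forces a limiting argument with $\lambda=k\to1^-$ together with $a\to1^-$.
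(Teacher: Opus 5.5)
Your overall route is the right one. The paper does not reprove Corollary A; it quotes it from \cite{9a} as the case $k\to1$ of Theorem F. Its own analogue (Theorem \ref{th1} and the corollary after it) runs your template: a coefficient bound (Lemma \ref{le2}, with shifted center, in place of yours), a Lemma \ref{le3}-type estimate, Cauchy--Schwarz, the worst case $|\alpha_0|\to1$, and for sharpness a disk automorphism composed with $\gamma+(1-\gamma)z$ together with $g=k(h-h(0))$, $k\to1^-$. Your final estimate $a+\frac{2(1-a^2)\rho}{(1+\gamma)(1-\rho)}$ and the resulting $\rho_0$ are correct.

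The first step, however, is wrong as written: Cauchy--Schwarz does not give $\sum_{n\ge1}|\beta_n|\rho^n\le\sum_{n\ge1}|\alpha_n|\rho^n$. Integrating your $n^2$-weighted inequality twice in the radius gives $\sum_{n\ge1}|\beta_n|^2\rho^n\le\sum_{n\ge1}|\alpha_n|^2\rho^n$. Cauchy--Schwarz then gives
\[
\sum_{n\ge1}|\beta_n|\rho^n\le\Big(\sum_{n\ge1}|\alpha_n|^2\rho^n\Big)^{1/2}\Big(\frac{\rho}{1-\rho}\Big)^{1/2}\le\frac{1-|\alpha_0|^2}{1+\gamma}\,\frac{\rho}{1-\rho}.
\]
By Cauchy--Schwarz again, the middle quantity is at least $\sum_{n\ge1}|\alpha_n|\rho^n$. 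This bound, not the comparison you state, is what your third step actually uses, so the proof closes once step one is rewritten this way.

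The literal comparison does hold for $\rho\le1/3$, hence for $\rho\le\rho_0$, but for a different reason. Write $g'=\omega h'$ with $\omega(z)=\sum_{j\ge0}\omega_jz^j$. Then $\sum n|\beta_n|r^{n-1}\le\big(\sum_{j\ge0}|\omega_j|r^j\big)\sum n|\alpha_n|r^{n-1}$. Theorem A applied to $\omega$ makes the first factor at most $1$ for $r\le1/3$, and integrating over $r\in[0,\rho]$ yields the comparison. This route even bypasses Lemma \ref{le3}.

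The obstacle you single out is not one. The proof of Lemma \ref{le3} only integrates the pointwise bound $|g'|^2\le k^2|h'|^2$ over circles $|z|=r<1$. So it holds verbatim with $k=1$ when $f$ is sense-preserving in $\mathbb{D}$, and Theorem F as quoted already allows $k=1$.

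Two slips to fix. First, $z\mapsto(1-\gamma)z+\gamma$ maps $\Omega_\gamma$ onto $\mathbb{D}$, not the reverse. The coefficient bound comes from writing $h(z)=F(\gamma+(1-\gamma)z)$ with $F\in\mathcal{B}(\mathbb{D})$ and applying Lemma \ref{le1} at the point $\gamma$: $|\alpha_n|=(1-\gamma)^n|F^{(n)}(\gamma)|/n!\le(1-|\alpha_0|^2)/(1+\gamma)$. Second, in the sharpness part $f$ is sense-preserving for each fixed $|\lambda|=k<1$ and fails to be so at $|\lambda|=1$, not ``only in the limit''.

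To complete the sharpness computation, take $h(z)=(b-w)/(1-bw)$ with $w=\gamma+(1-\gamma)z$. Then $|\alpha_n|=\frac{1-|\alpha_0|^2}{1+\gamma}t^{n-1}$ with $t=\frac{b(1-\gamma)}{1-b\gamma}\to1$ as $b\to1$. The sum minus $1$ equals $(1-\alpha_0)\big[\frac{(1+k)(1+\alpha_0)\rho}{(1+\gamma)(1-t\rho)}-1\big]$, which is positive for $b$ near $1$ as soon as $\rho>(1+\gamma)/(3+2k+\gamma)$. Taking $k<1$ close to $1$ covers every $\rho>\rho_0$.
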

 In this context, Ahamed {\it et al.} \cite{1a} have obtained the following improved versions of \textrm{Theorem B} for the class $\mathcal{B}(\Omega_{\gamma})$ with its restriction to $\mathbb{D}$.
\begin{theoG}\cite{1a} For $\gamma\in[0,1)$, and $m\in\mathbb{N}\setminus\{1\}$, let $f \in\mathcal{B}(\Omega_{\gamma})$ with $f(z)=\sum_{n=0}^{\infty} \alpha_nz^n$ for $z\in\mathbb{D}$. Then,
\beas |\alpha_0|+\sum_{n=1}^\infty \left(|\alpha_n|+\tau\frac{|\alpha_n|^m}{(1-\gamma)^{(m-1)n}}\right)\rho^n\leq 1\;\;\text{ for}\;\; \rho \leq \rho_0:= \frac{1 + \gamma}{3 + \gamma},\eeas
where \beas \tau=\frac{(1-\gamma)^m(3+\gamma)-(1-\gamma^2)}{8(m-1)}\;\;\text{for}\;\;0\leq \gamma<1.\eeas
 Furthermore, the quantities $\tau$ and $(1+\gamma)/(3+\gamma)$ cannot be improved.\end{theoG}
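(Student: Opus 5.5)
The plan is to reduce everything to the unit disk by the affine change of variables behind Theorem B. Observe that $z\in\Omega_\gamma$ if and only if $|(1-\gamma)z+\gamma|<1$. Hence $F(w):=f\bigl((w-\gamma)/(1-\gamma)\bigr)$ is an analytic self-map of $\mathbb{D}$, and $f(z)=F(\gamma+(1-\gamma)z)$. Writing $a:=|\alpha_0|=|F(\gamma)|$, we get
\beas \alpha_n=(1-\gamma)^n\frac{F^{(n)}(\gamma)}{n!},\qquad \frac{|\alpha_n|}{(1-\gamma)^n}=\frac{|F^{(n)}(\gamma)|}{n!}. \eeas
The weight $(1-\gamma)^{-(m-1)n}$ in the statement is therefore designed so that
\beas \frac{|\alpha_n|^m}{(1-\gamma)^{(m-1)n}}=|\alpha_n|\left(\frac{|F^{(n)}(\gamma)|}{n!}\right)^{m-1}. \eeas
The first step is to recall the coefficient estimate underlying Theorem B (Fournier--Ruscheweyh, via Ruscheweyh's derivative bound for self-maps of $\mathbb{D}$):
\beas |\alpha_n|\leq \frac{1-a^2}{1+\gamma},\qquad n\ge 1. \eeas
With this estimate the linear part is controlled by
\beas a+\frac{1-a^2}{1+\gamma}\,\frac{\rho}{1-\rho}, \eeas
which is $\le 1$ exactly when $\rho\le(1+\gamma)/(3+\gamma)$. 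Moreover, at $\rho=\rho_0$ the deficit equals
\beas 1-a-\frac{1-a^2}{2}=\frac{(1-a)^2}{2}. \eeas
Thus the whole proof hinges on showing that, at $\rho=\rho_0$, the additional term $\tau\sum_{n\ge1}|\alpha_n|^m(1-\gamma)^{-(m-1)n}\rho_0^n$ is at most this leftover $(1-a)^2/2$, or more precisely at most what remains after using the sharp linear bound.

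The second step is to bound the $m$-th power term. My first attempt is to get a decaying bound on $|\alpha_n|\,(|F^{(n)}(\gamma)|/n!)^{m-1}$. I would split it as $|\alpha_n|^{m-1}\cdot|\alpha_n|/(1-\gamma)^{(m-1)n}$, use the estimate $|\alpha_n|\le (1-a^2)/(1+\gamma)$ on the first factor, and use Schwarz--Pick/Wiener type inequalities for $F$ at the point $\gamma$ on the second. Concretely, I would use
\beas \sum_{n\ge1}\frac{|F^{(n)}(\gamma)|^2}{(n!)^2}(1-\gamma)^{2n}\le 1-a^2, \eeas
which holds because the disk $|w-\gamma|<1-\gamma$ lies in $\mathbb{D}$. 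This converts the power term into a quantity of order $(1-a^2)^m$ times a geometric-type series in $\rho_0$. Since $(1-a^2)^m\le 2^{m-1}(1-a)^{m-1}(1-a^2)$ and $m\ge2$, the power term is $O((1-a)^2)$ as $a\to1$. It then remains to check a one-variable inequality $\Phi(a)\le 0$ on $[0,1)$, where
\beas \Phi(a)=a+\frac{1-a^2}{2}+\tau\,C_{m,\gamma}(1-a^2)^m-1 \eeas
and $C_{m,\gamma}$ is the constant produced by the series. I would verify this by factoring out $(1-a)^2$ and using monotonicity in $a$. The constant $\tau$ is chosen exactly so that the bracket vanishes in the limit $a\to1^-$, and I would check that this reproduces the stated formula for $\tau$, with the factor $8(m-1)$ coming from the expansion at $a=1$.

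The main obstacle is this second step, namely getting the right size for the factor $(|F^{(n)}(\gamma)|/n!)^{m-1}$. A naive Ruscheweyh bound grows like $(1+\gamma)^{n}/(1-\gamma^2)^n$, and then the series with $\rho_0^n$ need not converge for $\gamma$ near $1$. So the decay must come from combining the $\ell^2$ (Wiener/Parseval) information with $|\alpha_n|\le(1-a^2)/(1+\gamma)$, and I expect to spend most effort balancing these. If that fails, I would fall back on a subordination argument: write $F=\varphi\circ\omega$ with $\varphi$ a disk automorphism, which reduces the problem to the extremal functions.

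For sharpness, I would take $f(z)=\varphi_a\bigl(\gamma+(1-\gamma)z\bigr)$ with $\varphi_a(w)=(a-w)/(1-aw)$, compute its coefficients explicitly (they are geometric in $n$), and substitute them into the left-hand side at $\rho=\rho_0+\epsilon$ or with $\tau$ replaced by $\tau+\epsilon$. Expanding as $a\to1^-$, the left-hand side exceeds $1$ at order $(1-a)^2$ unless both $\tau$ and $\rho_0$ take the stated values.
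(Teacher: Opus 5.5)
This statement is only quoted in the paper (from Ahamed et al.); the paper gives no proof of it. Judged on its own, your proposal has two genuine problems.

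Your reduction, the bound $|\alpha_n|\le(1-a^2)/(1+\gamma)$, and the deficit $(1-a)^2/2$ at $\rho_0$ are all correct. The decisive estimate of the $m$-th power term, however, is never carried out, and the Parseval route you sketch cannot carry it out. The bound $\sum_{n\ge1}|\alpha_n|^2\le 1-a^2$ gives no decay in $n$ and does nothing against the weight $(1-\gamma)^{-(m-1)n}$. Combined with $|\alpha_n|^{m-1}\le\bigl(\frac{1-a^2}{1+\gamma}\bigr)^{m-1}$, it yields a factor $(1-a^2)^{m-1/2}$, not $(1-a^2)^m$. For $m=2$ that is of order $(1-a)^{3/2}$, which the deficit $(1-a)^2/2$ cannot absorb. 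The observation you are missing is the identity
\[
\tau=\frac{(1-\gamma)(3+\gamma)}{8(m-1)}\bigl((1-\gamma)^{m-1}-\rho_0\bigr).
\]
So the series $\sum_n\bigl(\rho_0/(1-\gamma)^{m-1}\bigr)^n$ diverges, which is the obstacle you single out, exactly when $\tau\le 0$, and in that case the claim follows at once from Theorem B. If $\tau>0$, simply insert $|\alpha_n|\le\frac{1-a^2}{1+\gamma}$ into all $m$ factors. For $\rho\le\rho_0$ the extra term is then at most
\[
\tau\,\frac{(1-a^2)^m}{(1+\gamma)^m}\cdot\frac{\rho_0}{(1-\gamma)^{m-1}-\rho_0}=\frac{(1-\gamma)(1-a^2)^m}{8(m-1)(1+\gamma)^{m-1}}\le\frac{(1-a^2)^{m-2}(1+a)^2}{8(m-1)}\,(1-a)^2\le\frac{(1-a)^2}{2}.
\]
This is exactly your deficit, so the inequality follows.

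Your account of where $\tau$ comes from, and your sharpness plan, are wrong. For $m\ge 3$ the extra term is $O((1-a)^m)=o((1-a)^2)$. After factoring out $(1-a)^2$, your bracket therefore tends to $-\tfrac12$ for every $\tau$: nothing vanishes at $a=1$, and no $8(m-1)$ can come from that expansion. For $m=2$ the vanishing condition gives $(1+\gamma)$ where the stated formula has $(1-\gamma)$.

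Likewise, for $f=\varphi_a(\gamma+(1-\gamma)z)$ the deficit of the linear part at $\rho_0$ is asymptotically $\bigl(\frac{1+\gamma}{1-\gamma}\bigr)^2(1-a)^2>0$. This family therefore puts no restriction on $\tau$ when $m\ge3$. For $m=2$ it only forces $\tau\le\frac14(1+\gamma)\bigl((1-\gamma)(3+\gamma)-(1+\gamma)\bigr)$, which is $\frac{2(1+\gamma)}{1-\gamma}$ times the stated value.

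In fact the optimality of $\tau$ cannot be proved, because it fails. At $\gamma=0$ the stated value is $\frac{1}{4(m-1)}$. Now take the sharp coefficient inequality of Ponnusamy--Vijayawickrema--Wirths, which underlies Theorem D at $\gamma=0$:
\[
\sum_{n\ge1}|a_n|r^n+\Bigl(\frac{1}{1+a}+\frac{r}{1-r}\Bigr)\sum_{n\ge1}|a_n|^2r^{2n}\le(1-a^2)\frac{r}{1-r}.
\]
Apply it at $r=1/3$, together with $|a_n|^m\le|a_n|^2$ and $|a_n|\le 1-a^2$. This gives the inequality with $\tau=\tfrac12$ for every $m\ge2$; the final check reduces to $(3t+4)(2-t)\ge0$ with $t=1+a$.

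Only the sharpness of $\rho_0$ can be salvaged, and only when $\tau\ge0$. There it follows from the extremal functions of Theorem B, since the added term is nonnegative. When $\tau<0$, your family even gives the value $-\infty$ for every $\rho>\rho_0$ once $a$ is close to $1$. The reason is that then $\rho_0>(1-\gamma)^{m-1}$, so the $m$-th power series diverges.
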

\begin{theoH} \cite{1a} For $\gamma\in[0,1)$, and $m\in\mathbb{N}\setminus\{1\}$, let $f \in\mathcal{B}(\Omega_{\gamma})$ with $f(z)=\sum_{n=0}^{\infty} \alpha_nz^n$ for $z\in\mathbb{D}$. Then, 
\beas \sum\limits_{n=0}^{\infty}|\alpha_n|\rho^n+\left(\frac{8}{9}-\frac{27}{64}\lambda\right)\left(\frac{S_{\rho(1-\gamma)}}{\pi}\right)+\lambda\left(\frac{S_{\rho(1-\gamma)}}{\pi}\right)^2\leq 1 \;\;\text{for}\;\;\rho\leq \rho_0:=(1+\gamma)/(3+\gamma).\eeas
Furthermore, the radius $\rho_0$ is sharp, and the bounds of $\lambda$ and $8/9-27\lambda/64$ cannot be improved.\end{theoH}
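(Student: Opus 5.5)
The plan is to reduce everything to the unit disk. The affine map $z\mapsto \gamma+(1-\gamma)z$ sends $\Omega_\gamma$ onto $\mathbb{D}$. So every $f\in\mathcal{B}(\Omega_\gamma)$ can be written as $f(z)=F(\gamma+(1-\gamma)z)$ with $F\in\mathcal{B}(\mathbb{D})$, and $|\alpha_0|=|f(0)|=|F(\gamma)|=:a$. I would prove the statement for $\lambda$ in the admissible range, i.e. $0<\lambda$ with $8/9-27\lambda/64\ge 0$ together with whatever upper bound the sharpness analysis forces.

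The first ingredient is the majorant estimate that underlies Theorem B and Theorem C. In the form used by Fournier--Ruscheweyh and Evdoridis \emph{et al.}, it says that for $\rho\le\rho_\gamma$
\beas \sum_{n=0}^\infty|\alpha_n|\rho^n\le a+(1-a^2)\,\Psi_\gamma(\rho),\eeas
where $\Psi_\gamma$ is increasing in $\rho$. This bound comes from the coefficient bounds $|\alpha_n|\le (1-a^2)(\cdots)$ derived from the Schwarz--Pick lemma for $F$. I would take it from the proofs of Theorems B and C rather than rederive it.

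The second ingredient is the area term. Here I use the interpretation implicit in Theorem C, namely that $S_{\rho(1-\gamma)}$ is governed by the standard estimate for functions in $\mathcal{B}(\mathbb{D})$:
\beas \frac{S_r}{\pi}\le \frac{r^2(1-a^2)^2}{(1-a^2r^2)^2}=:A(a,r).\eeas
With $r=\rho(1-\gamma)\le (1-\gamma^2)/(3+\gamma)\le 1/3$, this gives $A\le (1-a^2)^2/(9(1-a^2/9)^2)=9(1-a^2)^2/(9-a^2)^2$.

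Since both bounds are increasing in $\rho$, it suffices to work at $\rho=\rho_\gamma$. There the proof of Theorem C already shows that
\beas a+(1-a^2)\Psi_\gamma(\rho_\gamma)+\frac{8}{9}A\le 1 .\eeas
It therefore remains to show that
\beas -\frac{27}{64}\lambda A+\lambda A^2\le 0,\quad\text{i.e.}\quad A\le \frac{27}{64},\eeas
or, if that inequality is too crude, to absorb the cross term using the slack left in Theorem C's estimate.

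Concretely, I would define
\beas \Phi(a)=a+(1-a^2)\Psi_\gamma(\rho_\gamma)+\Big(\frac89-\frac{27\lambda}{64}\Big)A+\lambda A^2,\qquad a\in[0,1),\eeas
and show $\Phi(a)\le 1$. Dividing $1-\Phi(a)$ by $(1-a)$ should leave a polynomial-type expression in $a$ whose nonnegativity on $[0,1]$ is checked by monotonicity. The natural place where $A$ reaches its largest relevant value is $a\to 0$, where $A=1/9<27/64$, so the quadratic term should be harmless there.

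I expect this one-variable inequality to be the main obstacle. The difficulty is that both the $8/9$ term and the $\lambda$-terms become tight in the limit $a\to1$ with a specific rate, and that limit is exactly what fixes the constants $8/9$ and $27/64$. My first attempt would be a Taylor expansion of $1-\Phi$ in $1-a$ to second order, followed by a crude bound away from $a=1$.

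For sharpness, I would take $F(w)=(b-w)/(1-bw)$, so that $f(z)=\varphi_b(\gamma+(1-\gamma)z)$, and compute every term explicitly as $b\to1^-$. Any $\rho>\rho_\gamma$ makes the left side exceed $1$, which shows the radius cannot be increased. Expanding at $\rho=\rho_\gamma$ in powers of $1-b$, the coefficients of the first and second order terms match $8/9-27\lambda/64$ and $\lambda$ exactly, so neither can be increased.
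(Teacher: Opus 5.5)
The paper gives no proof of Theorem H (it is only quoted from Ahamed et al.), so your plan has to stand on its own. The inequality half does, and it is shorter than you make it. Read $S:=S_{\rho(1-\gamma)}/\pi$ as you do, i.e. as the area of $f(\mathbb{D}(0;\rho(1-\gamma)))$ divided by $\pi$. Then
\[
\Big(\frac89-\frac{27\lambda}{64}\Big)S+\lambda S^2=\frac89\,S-\lambda S\Big(\frac{27}{64}-S\Big),
\]
and $S=\sum_{n\ge1}n|\alpha_n|^2r^{2n}\le r^2\sum_{n\ge1}|\alpha_n|^2\le r^2\le\frac19$ for $r=\rho(1-\gamma)\le\frac13$. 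So Theorem C, used as a black box, gives the inequality for every $\lambda\ge0$, even when $\frac89-\frac{27\lambda}{64}<0$. The one-variable analysis and Taylor expansion you flag as the main obstacle are not needed. If you prefer your own bounds, note that the proof of Theorem C need not use your $A$, so ``already shows'' has to be checked. It does hold: with $\Psi_\gamma(\rho)=\rho/((1+\gamma)(1-\rho))$ you get $\Psi_\gamma(\rho_\gamma)=\frac12$, and the claim reduces to $16(1+a)^2\le(9-a^2)^2$, i.e. $(1-a)(a+5)\ge0$. For $\lambda<0$ your argument gives nothing, because the cross term then has the wrong sign.

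The genuine gap is the sharpness of the two constants, which cannot be obtained the way you describe. Take $f(z)=\varphi_b(\gamma+(1-\gamma)z)$. Then $a=(b-\gamma)/(1-b\gamma)$ and $|\alpha_n|=\frac{1-a^2}{1+\gamma}c^{n-1}$ with $1-c=\frac{1-a}{1+\gamma}$. Hence $\sum|\alpha_n|\rho_\gamma^n=1-\frac{2(1-a)^2}{3-a}$, while $S\le\frac{9}{64}(1-a^2)^2$.

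There is no first-order term in $1-b$ (equivalently $1-a$), and $\lambda S^2$ only enters at order $(1-a)^4$. For $\lambda\ge0$ the whole left side is at most $1-\frac12(1-a)^2+o((1-a)^2)$, so this family does not even make the linear coefficient tight.

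More decisively, the identity above shows that these claims are false as the statement is written here. Granting Theorem C and $S\le\frac19$, for any $\lambda>0$ the coefficient $\frac89-\frac{27\lambda}{64}$ can be raised by $\lambda\big(\frac{27}{64}-\frac19\big)$ without breaking the inequality. Since the inequality holds for all $\lambda\ge0$, there is also no upper bound on $\lambda$ that could be sharp, and no choice of extremal function can fix this.

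Only the sharpness of $\rho_0$ survives, and your argument for it is fine. For $\rho>\rho_\gamma$, the excess of the majorant over $1$ along this family is of first order in $1-a$. It therefore dominates the $O((1-a)^2)$ area terms regardless of their signs.
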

In 2022, Ahamed {\it et al.} \cite{1a} established the following refined Bohr inequality for the class $\mathcal{B}(\Omega_{\gamma})$ with its restriction to $\mathbb{D}$.
\begin{theoJ} For $\gamma\in[0,1)$ and $N\in\mathbb{N}$, let $f \in\mathcal{B}(\Omega_{\gamma})$ with $f(z)=\sum_{n=0}^{\infty} \alpha_nz^n$ for $z\in\mathbb{D}$. Then, 
\beas \sum_{n=1}^\infty |\alpha_n|\rho^n+\left(\frac{1}{1+|\alpha_1|}+\frac{\rho}{1-\rho}\right) \sum_{n=2}^{\infty} |\alpha_n|^2\rho^{2(n-1)}\leq 1\;\;\text{ for}\;\; \rho \leq \rho_0:= \rho_{\gamma} := \frac{1 + \gamma}{3 + \gamma}. \eeas
The constant $\rho_0$ cannot be improved.
\end{theoJ}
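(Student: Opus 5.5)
The plan is to transfer everything to the unit disk and then use a refined coefficient estimate there. The transfer is standard: if $f\in\mathcal{B}(\Omega_\gamma)$, then $F(w)=f\bigl((w-\gamma)/(1-\gamma)\bigr)$ belongs to $\mathcal{B}(\mathbb{D})$, and $f(z)=F((1-\gamma)z+\gamma)$. Hence
\beas \alpha_n=\frac{F^{(n)}(\gamma)}{n!}(1-\gamma)^n .\eeas
The sharp Schwarz--Pick type bound for $|F^{(n)}(\gamma)|/n!$ at the point $\gamma$ then gives the Fournier--Ruscheweyh coefficient estimate
\beas |\alpha_n|\le \frac{1-|\alpha_0|^2}{1+\gamma},\quad n\ge 1, \eeas
which in particular yields $|\alpha_1|\le 1-|\alpha_0|^2\le 1$.

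The main tool will be a refined lemma on $\Omega_\gamma$ modelled on the Ponnusamy--Vijayawickrema--Wirths inequality. I would aim to prove
\beas \sum_{n=1}^\infty|\alpha_n|\rho^n+\Bigl(\frac{1}{1+|\alpha_0|}+\frac{\rho}{1-\rho}\Bigr)\sum_{n=1}^\infty|\alpha_n|^2\rho^{2n}\le \frac{(1-|\alpha_0|^2)\rho}{(1+\gamma)(1-\rho)}. \eeas
The intended route is to prove this through the representation $f=F\circ T$ with $T(z)=(1-\gamma)z+\gamma$, following exactly the proof of Theorem D. Since Theorem D is stated earlier, I would take its underlying estimate as given rather than re-derive it.

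With that in hand, I would split the left-hand side of the stated inequality as
\beas |\alpha_1|\rho+\sum_{n\ge2}|\alpha_n|\rho^n+\Bigl(\frac{1}{1+|\alpha_1|}+\frac{\rho}{1-\rho}\Bigr)\rho^{-2}\sum_{n\ge2}|\alpha_n|^2\rho^{2n}. \eeas
I would then apply the refined lemma to the tail $n\ge2$, with the role of the "free" coefficient played by $\alpha_1$. One way to arrange this is to use the Schur-type reduction $f\mapsto (f-\alpha_0)/(1-\ol{\alpha_0}f)$, divided by $z$ after composing with $T$, which again lands in $\mathcal{B}(\mathbb{D})$. This should bound the tail by
\beas \frac{(1-|\alpha_1|^2)\rho}{(1+\gamma)(1-\rho)}. \eeas
At $\rho=\rho_\gamma$ we have $\rho/(1-\rho)=(1+\gamma)/2$, so the total is at most
\beas |\alpha_1|\rho_\gamma+\tfrac12(1-|\alpha_1|^2). \eeas
This is a quadratic in $t=|\alpha_1|\in[0,1]$ whose maximum on $[0,1]$ is at most $1$, since $\rho_\gamma\le1$. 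Monotonicity in $\rho$ then gives the inequality for all $\rho\le\rho_\gamma$.

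\textbf{Sharpness.} I would use $f(z)=\varphi_a(T(z))$ with $\varphi_a(w)=(a-w)/(1-aw)$ and let $a\to1^-$. Expanding the coefficients explicitly as geometric series shows that the left-hand side exceeds $1$ for every $\rho>\rho_\gamma$, so $\rho_\gamma$ cannot be improved.

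\textbf{Main obstacle.} I expect the hardest step to be the reduction that lets the weight $1/(1+|\alpha_1|)$ appear. The refined lemma naturally produces $1/(1+|\text{constant term}|)$, so I need an auxiliary function in $\mathcal{B}(\Omega_\gamma)$, or in $\mathcal{B}(\mathbb{D})$ after the transfer, whose constant term is comparable to $\alpha_1$. At the same time, its higher coefficients must dominate $\alpha_{n}\rho^{-1}$, and the factor $\rho^{-2}$ on the squared sum must not spoil the estimate. If the Schur-type quotient does not give this cleanly, I would fall back on the crude bound $|\alpha_n|\le(1-|\alpha_0|^2)/(1+\gamma)$ for the squared terms and verify the resulting one-variable inequality in $|\alpha_0|$ directly at $\rho=\rho_\gamma$.
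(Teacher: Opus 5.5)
\textbf{Your central tail estimate is false, and the sharpness claim cannot be proved.} The paper does not prove Theorem J; it quotes it from the literature. So I judge your argument on its own. Your key tail estimate
\[
\sum_{n\ge 2}|\alpha_n|\rho^n+\Bigl(\frac{1}{1+|\alpha_1|}+\frac{\rho}{1-\rho}\Bigr)\rho^{-2}\sum_{n\ge 2}|\alpha_n|^2\rho^{2n}\le \frac{(1-|\alpha_1|^2)\rho}{(1+\gamma)(1-\rho)}
\]
fails. Take $\gamma=0$ and $f(z)=(a-z)/(1-az)$, so that $|\alpha_1|=1-a^2$ and $|\alpha_n|=(1-a^2)a^{n-1}$. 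The left side is at least $(1-a^2)a\rho^2/(1-a\rho)$, which is of order $a$. The right side is $a^2(2-a^2)\rho/(1-\rho)$, which is of order $a^2$. For example, $a=0.01$ and $\rho=1/3$ give roughly $1.1\cdot 10^{-3}$ against $10^{-4}$.

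This is the obstacle you anticipated, and it is fatal. With $F=f\circ T^{-1}$, the Schur quotient $(F-\alpha_0)/\bigl((1-\ol{\alpha_0}F)\,\tfrac{w-\gamma}{1-\gamma w}\bigr)$ has value $(1+\gamma)\alpha_1/(1-|\alpha_0|^2)$ at $w=\gamma$, not $\alpha_1$. Its higher coefficients depend nonlinearly on all the $\alpha_j$. For $F=\varphi_a$ it is a unimodular constant, so its tail vanishes while that of $f$ does not. The linear alternative $(f-\alpha_0)/z$ is only bounded by $1+|\alpha_0|$. Also, a PVW-type lemma on $\Omega_\gamma$ with weight $\rho/(1-\rho)$ is not contained in the statement of Theorem D, so you cannot take it as given.

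None of this is needed for the inequality: your crude fallback, applied to every term, already suffices. Use $|\alpha_n|\le(1-|\alpha_0|^2)/(1+\gamma)\le 1/(1+\gamma)$ and $1/(1+|\alpha_1|)\le 1$, together with $\rho_\gamma/(1-\rho_\gamma)=(1+\gamma)/2$ and $\rho_\gamma^2/(1-\rho_\gamma^2)=(1+\gamma)^2/(4(2+\gamma))$. Then the left side at $\rho=\rho_\gamma$ is at most $\frac12+\frac{3+\gamma}{8(2+\gamma)}\le\frac{11}{16}$, and monotonicity in $\rho$ finishes the inequality.

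The sharpness part fails, and cannot be repaired. Unlike Theorems B and D, the left side here has no $|\alpha_0|$ term. For $f=\varphi_a\circ T$ with $a\to1^-$, every $|\alpha_n|$ with $n\ge1$ tends to $0$, so the left side tends to $0$ rather than exceeding $1$. More decisively, the bound above is uniform in $f$. The majorant
\[
\frac{1}{1+\gamma}\,\frac{\rho}{1-\rho}+\frac{1}{(1+\gamma)^2(1-\rho)}\,\frac{\rho^2}{1-\rho^2}
\]
is continuous in $\rho$ and at most $11/16$ at $\rho_\gamma$. Hence the inequality persists on some interval $\rho\le\rho_1$ with $\rho_1>\rho_\gamma$; for $\gamma=0$ the majorant equals $62/63$ at $\rho=2/5$.

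So the assertion that $\rho_0$ cannot be improved is false for the statement as transcribed here. The unused parameter $N$ suggests the formulation was garbled from the original source. No extremal function can establish this sharpness claim, so you should recover the correct statement before attempting a sharpness proof.
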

\section{Main results}
\noindent In the shifted disk $\Omega_{\gamma}$, Evdoridis {\it et al.} {\cite{9a} have recently studied sharply improved versions of the Bohr inequality for analytic and harmonic mappings, 
with its restriction to the unit disk $\mathbb{D}$. Subsequently,  Ahamed {\it et al.} \cite{1a} have established a number of sharply improved and refined versions of Bohr's inequality for analytic functions in $\Omega_{\gamma}$, with the restriction to the unit disk $\Bbb{D}$.\\[2mm]
\indent In this paper, we define the Bohr radius for the class $\mathcal{B}(\Omega_{\gamma})$, motivated by Fournier and Ruscheweyh \cite{10}. It is defined as the number 
$B_{\Omega_{\gamma}}\in (0, 1)$ such that 
 \beas B_{\Omega_{\gamma}}=\sup\left\{\rho\in (0,1):M_f(\rho)\leq 1\;\text{for} \;f(z)=\sum_{n=0}^{\infty}\alpha_n\left(z+\frac{\gamma}{1-\gamma}\right)^n\in\mathcal{B}(\Omega_{\gamma}),z\in\Omega_{\gamma}\right\},\eeas 
where $M_f(\rho)=\sum_{n=0}^{\infty}|\alpha_n|\left(\rho/(1-\gamma)\right)^n$ with $|\gamma+(1-\gamma)z|=\rho$, is the majorant series associated with the analytic functions $f\in\mathcal{B}(\Omega_{\gamma})$. It is well known that $B_{\mathbb{D}} = 1/3$ when $\Omega_{\gamma}=\mathbb{D}$.\\[2mm]
\indent 
For the class of analytic and harmonic mapping in $\Omega_{\gamma}$, we obtain sharply Bohr radius, refined Bohr radius, and improved Bohr radius in this paper, without its restriction to the unit disk $\mathbb{D}$.\\[2mm]
The following are key lemmas of this paper and will be used to prove the main results.
\begin{lem}\cite{25}\label{le1} For $f\in\mathcal{B}(\mathbb{D})$, then we have 
\beas \frac{\left|f^{(n)}(\alpha)\right|}{n!}\leq \frac{1-|f(\alpha)|^2}{(1-|\alpha|)^{n-1}(1-|\alpha|^2)}\;\;\text{for each}\;\; n\geq 1\;\;\text{and}\;\; \alpha\in\mathbb{D}.\eeas\end{lem}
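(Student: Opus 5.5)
The plan is to reduce the general point $\alpha\in\mathbb{D}$ to the case $\alpha=0$ by composing with a disk automorphism, and then to use the classical coefficient estimate at the origin.

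\emph{Step 1: normalize.} Fix $\alpha\in\mathbb{D}$ and put $w=f(\alpha)$, so $|w|\le 1$. If $|w|=1$, the maximum principle forces $f\equiv w$, so $f^{(n)}(\alpha)=0$ and the inequality is trivial. Assume therefore $|w|<1$ and define
\beas F(\zeta)=\frac{f\!\left(\frac{\zeta+\alpha}{1+\ol{\alpha}\zeta}\right)-w}{1-\ol{w}\,f\!\left(\frac{\zeta+\alpha}{1+\ol{\alpha}\zeta}\right)},\qquad \zeta\in\mathbb{D}.\eeas
Then $F$ maps $\mathbb{D}$ into $\ol{\mathbb{D}}$ and $F(0)=0$.

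\emph{Step 2: invert.} Writing $\varphi(z)=(z-\alpha)/(1-\ol{\alpha}z)$, we have
\beas f(z)=\frac{w+F(\varphi(z))}{1+\ol{w}F(\varphi(z))}=w+(1-|w|^2)\sum_{k\ge1}(-\ol{w})^{k-1}F(\varphi(z))^k.\eeas
So $f-w$ is a composition of $G(\xi)=(w+\xi)/(1+\ol{w}\xi)-w$ with $F\circ\varphi$. Bounding Taylor coefficients directly through this composition is messy.

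\emph{Step 3: use a Cauchy estimate on a disk.} A cleaner route is Cauchy's estimate on the disk $\mathbb{D}_{1-|\alpha|}(\alpha)\subset\mathbb{D}$. Apply it not to $f$ itself but to the function
\beas u(z)=\frac{f(z)-w}{1-\ol{w}f(z)},\eeas
which satisfies $|u|\le1$ and $u(\alpha)=0$. Expand $u(z)=\sum_{n\ge1}c_n(z-\alpha)^n$ with $c_n=u^{(n)}(\alpha)/n!$. Rescaling to the unit disk via $\zeta\mapsto\alpha+(1-|\alpha|)\zeta$ and using the classical estimate for bounded functions vanishing at $0$ gives
\beas |c_n|\le \frac{1}{(1-|\alpha|)^n}.\eeas
For $n=1$, the Schwarz--Pick lemma gives the sharper bound $|c_1|\le 1/(1-|\alpha|^2)$.

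\emph{Step 4: transfer back to $f$.} Write $f=(w+u)/(1+\ol{w}u)$, so that
\beas f^{(n)}(\alpha)/n!=(1-|w|^2)\cdot[\text{coefficient of }(z-\alpha)^n\text{ in }u/(1+\ol{w}u)].\eeas
A termwise majorant argument would introduce powers of $|w|$ and lose sharpness. I would instead use the Ruscheweyh-type trick: $f-w=(1-|w|^2)\,u/(1+\ol{w}u)$ and $|1+\ol{w}u|\ge 1-|w|$, so Cauchy's estimate on the circle $|z-\alpha|=r<1-|\alpha|$ gives
\beas \frac{|f^{(n)}(\alpha)|}{n!}\le \frac{1-|w|^2}{r^n}\max_{|z-\alpha|=r}\frac{|u|}{|1+\ol{w}u|}.\eeas
This loses a factor $(1+|w|)$. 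To recover it, I would apply the bound not to $f$ but to $f$ composed with the automorphism $\varphi^{-1}$, and use $|(\varphi^{-1})'|$ to produce the factor $(1-|\alpha|)^{n-1}(1-|\alpha|^2)$. The key intermediate claim is the known sharp statement $|a_n|\le 1-|a_0|^2$ for the Taylor coefficients of a self-map of the disk at the origin. This follows from Wiener's trick: average $f(\epsilon z)$ over $n$-th roots of unity $\epsilon$ to get a self-map $a_0+a_nz^n+\dots$ in the variable $z^n$, and then apply Schwarz--Pick to it.

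\emph{Step 5: combine.} Set $g=f\circ\varphi^{-1}$, so that $g(0)=w$ and $|g_k|\le 1-|w|^2$ for every $k\ge1$ by Wiener's bound. Then $f=g\circ\varphi$, and we expand about $\alpha$, where the $(z-\alpha)^n$-coefficient of $\varphi(z)^k$ is explicit. Since $\varphi$ has its pole at $1/\ol{\alpha}$, its power series about $\alpha$ has radius $1/|\alpha|-|\alpha|$. Majorizing $\varphi^k$ termwise, summing over $k$, and bounding each $|g_k|$ by $1-|w|^2$ leaves $(1-|w|^2)$ times the $n$-th coefficient of $\sum_k\Phi^k=\Phi/(1-\Phi)$, where $\Phi$ is the majorant of $\varphi$.

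\emph{Main obstacle.} The delicate step is Step 5: evaluating this coefficient of $\Phi/(1-\Phi)$ and showing it equals $1/\big((1-|\alpha|)^{n-1}(1-|\alpha|^2)\big)$. Here
\beas \Phi(t)=\frac{t}{1-|\alpha|^2}\sum_{j\ge0}\left(\frac{|\alpha|t}{1-|\alpha|^2}\right)^j.\eeas
I expect $\Phi/(1-\Phi)$ to simplify to a rational function of the form $t/\big((1-|\alpha|^2)-(1+|\alpha|)t\big)$, whose $n$-th coefficient is exactly the required bound.
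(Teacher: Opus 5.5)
The paper gives no proof of this lemma: it is quoted from Ruscheweyh \cite{25}, and the paper only uses its $\alpha=0$ case, to get $|a_n|\le(1-\gamma)^n(1-|a_0|^2)$ in Lemma \ref{le2}. That case is exactly the Wiener bound you prove in Step 4. So there is no in-paper argument to compare with, and your proposal has to stand on its own. It does, once it is cut down to Step 5 plus the Wiener bound.

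The simplification you only "expect" is in fact exact. With $t=z-\alpha$ and $\alpha=|\alpha|e^{i\theta}$,
\[
\varphi(\alpha+t)=\frac{t}{1-|\alpha|^2-\overline{\alpha}\,t}=e^{i\theta}\,\Phi\bigl(e^{-i\theta}t\bigr),\qquad \Phi(t)=\frac{t}{1-|\alpha|^2-|\alpha|\,t}.
\]
Hence the coefficient of $t^n$ in $\varphi^k$ has exactly the same modulus as the one in $\Phi^k$. Moreover,
\[
\frac{\Phi}{1-\Phi}=\frac{t}{(1-|\alpha|^2)-(1+|\alpha|)\,t},
\]
whose $t^n$-coefficient is $(1+|\alpha|)^{n-1}/(1-|\alpha|^2)^n=1/\bigl((1-|\alpha|)^{n-1}(1-|\alpha|^2)\bigr)$.

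To finish, note that $f(\alpha+t)=g(\varphi(\alpha+t))$ with $\varphi(\alpha+t)=O(t)$. So $f^{(n)}(\alpha)/n!$ is the finite sum over $1\le k\le n$ of $g_k$ times the $t^n$-coefficient of $\varphi^k$, and no convergence issue arises. Wiener's bound $|g_k|\le 1-|w|^2$ then gives the claim. The case $|w|=1$ is covered automatically, since then $g$ is constant.

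In a final write-up, drop everything in Steps 1--4 except the Wiener lemma. The Cauchy-estimate attempts are dead ends that lose factors, as you noticed yourself, and the auxiliary function $F$ of Step 1 is never used.
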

\begin{lem}\cite{9a}\label{le3} Suppose that $h(z)=\sum_{n=0}^\infty a_nz^n$ and $g(z)=\sum_{n=0}^\infty b_nz^n$ are two analytic functions in $\mathbb{D}$ such that $|g’(z)|\leq k|h’(z)|$ in $\mathbb{D}$ and for some $k\in [0,1)$ with $|h(z)|\leq 1$. Then,
\beas\sum_{n=1}^\infty n|b_n|^2\rho^{2n}\leq k^2\sum_{n=1}^\infty n|a_n|^2\rho^{2n}\;\;\text{and}\;\;\sum_{n=1}^\infty |b_n|^2\rho^n\leq k^2 \sum_{n=1}^\infty |a_n|^2\rho^n\;\;\text{for}\;\;|z|=\rho<1.\eeas
\end{lem}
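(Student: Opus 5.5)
The plan is a subordination-free Parseval argument. We integrate the pointwise inequality $|g'(z)|^2\le k^2|h'(z)|^2$ over circles, then integrate the resulting coefficient inequality in the radius twice to lower the weight from $n^2$ to $n$ and then to $1$. The boundedness $|h|\le 1$ matters only because it makes $\sum|a_n|^2\le 1$. Hence every series below converges for radii in $[0,1)$, and termwise integration is legitimate. We never need to divide by $h'$, so its possible zeros cause no trouble. The constant $b_0$ plays no role.

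\textbf{Step 1 (circle means).} Fix $r\in(0,1)$ and put $z=re^{it}$. By Parseval's identity applied to $g'(z)=\sum_{n\ge1}nb_nz^{n-1}$ and $h'(z)=\sum_{n\ge1}na_nz^{n-1}$,
\beas \frac{1}{2\pi}\int_0^{2\pi}|g'(re^{it})|^2\,dt=\sum_{n=1}^\infty n^2|b_n|^2r^{2n-2},\qquad \frac{1}{2\pi}\int_0^{2\pi}|h'(re^{it})|^2\,dt=\sum_{n=1}^\infty n^2|a_n|^2r^{2n-2}.\eeas
Integrating the hypothesis $|g'|^2\le k^2|h'|^2$ over the circle gives
\beas \sum_{n=1}^\infty n^2|b_n|^2r^{2n-2}\le k^2\sum_{n=1}^\infty n^2|a_n|^2r^{2n-2}\quad\text{for all } r\in[0,1).\eeas

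\textbf{Step 2 (first inequality).} Multiply the inequality of Step 1 by $2r$ and integrate over $r\in[0,\rho]$. Since $\int_0^\rho 2n^2r^{2n-1}\,dr=n\rho^{2n}$, we obtain
\beas \sum_{n=1}^\infty n|b_n|^2\rho^{2n}\le k^2\sum_{n=1}^\infty n|a_n|^2\rho^{2n}.\eeas
Geometrically, this says that the area of $g(\mathbb{D}_\rho(0))$, counted with multiplicity, is at most $k^2$ times that of $h$.

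\textbf{Step 3 (second inequality).} Substitute $t=\rho^2\in[0,1)$ in Step 2 to get $\sum n|b_n|^2t^n\le k^2\sum n|a_n|^2t^n$ for every $t\in[0,1)$. Divide by $t>0$ and integrate over $t\in[0,\rho]$. Since $\int_0^\rho nt^{n-1}\,dt=\rho^n$, this yields
\beas \sum_{n=1}^\infty |b_n|^2\rho^n\le k^2\sum_{n=1}^\infty|a_n|^2\rho^n.\eeas

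\textbf{Main obstacle.} The only point needing care is justifying the termwise integrations in Steps 2 and 3. All terms are nonnegative, and the series converge locally uniformly on $[0,1)$: the $a_n$ are bounded, and Step 1 then forces the $n^2|b_n|^2r^{2n-2}$-series to converge as well. So monotone convergence (or uniform convergence on $[0,\rho]$) settles it, and the argument is otherwise routine.
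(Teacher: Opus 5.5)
The paper gives no proof of this lemma and simply imports it from \cite{9a}. Your argument is correct and is the standard one for this result in the literature: integrate $|g'|^2\le k^2|h'|^2$ over circles via Parseval, integrate in the radius once to get the $n|b_n|^2\rho^{2n}$ inequality, then divide by $t$ and integrate again to get the $|b_n|^2\rho^n$ one. One small correction to your convergence remark: all the series converge for radii below $1$ for any function analytic in $\mathbb{D}$, so neither $|h|\le 1$ nor $k<1$ is actually needed.
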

\begin{lem}\label{le4} Suppose that $h(z)=\sum_{n=0}^\infty a_n\left(z+\frac{\gamma}{1-\gamma}\right)^n$ and $g(z)=\sum_{n=0}^\infty b_n\left(z+\frac{\gamma}{1-\gamma}\right)^n$ are two analytic functions in $\Omega_{\gamma}$ such that $|g’(z)|\leq k|h’(z)|$ in $\Omega_{\gamma}$ and for some $k\in [0,1)$ with $|h(z)|\leq 1$. Then, for $\left|(1-\gamma)z+\gamma\right|=\rho<1$, we have
\beas&&\sum_{n=1}^\infty n\frac{|b_n|^2}{(1-\gamma)^{2n}}\rho^{2n}\leq k^2 \sum_{n=1}^\infty n\frac{|a_n|^2}{(1-\gamma)^{2n}}\rho^{2n}\\[2mm]\text{and}
&&\sum_{n=1}^\infty \frac{|b_n|^2}{(1-\gamma)^{2n}}\rho^n\leq k^2 \sum_{n=1}^\infty \frac{|a_n|^2}{(1-\gamma)^{2n}}\rho^n.\eeas
\end{lem}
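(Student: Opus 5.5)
The natural route is to reduce Lemma \ref{le4} to Lemma \ref{le3} through the affine change of variables that carries $\mathbb{D}$ onto $\Omega_{\gamma}$. Let
\beas w=(1-\gamma)z+\gamma,\quad\text{that is,}\quad z=\frac{w-\gamma}{1-\gamma},\qquad z+\frac{\gamma}{1-\gamma}=\frac{w}{1-\gamma}.\eeas
Since $|z+\gamma/(1-\gamma)|<1/(1-\gamma)$ exactly when $|w|<1$, the map $z\mapsto w$ is a bijection of $\Omega_{\gamma}$ onto $\mathbb{D}$. Put $H(w)=h(z)$ and $G(w)=g(z)$. These are analytic in $\mathbb{D}$, with expansions
\beas H(w)=\sum_{n=0}^\infty \frac{a_n}{(1-\gamma)^n}w^n,\qquad G(w)=\sum_{n=0}^\infty \frac{b_n}{(1-\gamma)^n}w^n,\eeas
and both series converge in all of $\mathbb{D}$, because $h$ and $g$ are analytic on the disk $\Omega_\gamma$ centred at $-\gamma/(1-\gamma)$. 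Writing $A_n=a_n/(1-\gamma)^n$ and $B_n=b_n/(1-\gamma)^n$ puts us in the setting of Lemma \ref{le3}.

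Next, check the hypotheses. Clearly $|H(w)|=|h(z)|\leq 1$ on $\mathbb{D}$. By the chain rule, $H'(w)=h'(z)/(1-\gamma)$ and $G'(w)=g'(z)/(1-\gamma)$. The common factor cancels, so $|G'(w)|\leq k|H'(w)|$ in $\mathbb{D}$. Lemma \ref{le3} then gives, for $|w|=\rho<1$,
\beas \sum_{n=1}^\infty n|B_n|^2\rho^{2n}\leq k^2\sum_{n=1}^\infty n|A_n|^2\rho^{2n}\quad\text{and}\quad \sum_{n=1}^\infty |B_n|^2\rho^{n}\leq k^2\sum_{n=1}^\infty |A_n|^2\rho^{n}.\eeas
Substituting $|A_n|^2=|a_n|^2/(1-\gamma)^{2n}$ and $|B_n|^2=|b_n|^2/(1-\gamma)^{2n}$, and noting $|w|=|(1-\gamma)z+\gamma|=\rho$, yields exactly both inequalities of the lemma.

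The only step needing care is the hypothesis bookkeeping rather than any estimate. One must confirm that the expansion about $-\gamma/(1-\gamma)$ transforms into the Taylor series of $H$ at $0$ with the stated coefficients. One must also confirm that the derivative condition is preserved, which it is because the Jacobian factor $1/(1-\gamma)$ is common to $h'$ and $g'$. If one prefers not to rely on Lemma \ref{le3} directly, its first inequality can be obtained by a self-contained argument:
\begin{itemize}
\item write $\sum n|B_n|^2\rho^{2n}$ as the area integral $\frac{1}{\pi}\int_{|w|<\rho}|G'|^2\,dA$;
\item compare it pointwise with the corresponding integral of $|H'|^2$, using $|G'|\leq k|H'|$.
\end{itemize}
The second inequality then follows by dividing the first by $\rho^{2}$ (replacing $\rho^2$ by $r$) and integrating in $r$ from $0$ to $\rho$, which turns $\sum n|B_n|^2 r^{n-1}$ into $\sum |B_n|^2\rho^n$.
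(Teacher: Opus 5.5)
Your proposal is correct and is essentially the paper's proof. The paper also composes $h$ and $g$ with the affine map $\Phi(\xi)=(\xi-\gamma)/(1-\gamma)$ of $\mathbb{D}$ onto $\Omega_\gamma$, reads off the rescaled coefficients $a_n/(1-\gamma)^n$ and $b_n/(1-\gamma)^n$, transfers $|g'|\le k|h'|$ to the composed functions, and applies Lemma~\ref{le3}. Your bookkeeping is more explicit than the paper's (convergence on all of $\mathbb{D}$, the common chain-rule factor $1/(1-\gamma)$), and your optional area-integral derivation of Lemma~\ref{le3} is also valid.
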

\begin{proof} Let $\Phi : \mathbb{D}\to\Omega_{\gamma}$ be a function defined by $\Phi(z)=(z-\gamma)/(1-\gamma)$. Since $h,g: \Omega_{\gamma}\to\ol{\mathbb{D}}$, so
the composition $\varphi_1=h\circ \Phi$ and $\varphi_2=g\circ \Phi$ are analytic in $\mathbb{D}$. 
Since $z\in\Omega_{\gamma}$, so we write $z=(\xi-\gamma)/(1-\gamma)$ for $\xi\in\mathbb{D}$. Thus, we have 
\bs\beas\varphi_1(\xi)=h\left(\frac{\xi-\gamma}{1-\gamma}\right)=\sum\limits_{n=0}^\infty \frac{a_n}{(1-\gamma)^n}\xi^n\;\;\text{and}\;\;\varphi_2(\xi)=g\left(\frac{\xi-\gamma}{1-\gamma}\right)=\sum\limits_{n=0}^\infty \frac{b_n}{(1-\gamma)^n}\xi^n\quad\text{for}\;\;\xi\in\mathbb{D}.\eeas\es
Since \beas\left|g’\left(\frac{\xi-\gamma}{1-\gamma}\right)\right|\leq k \left|h’\left(\frac{\xi-\gamma}{1-\gamma}\right)\right|\;\text{for}\; \xi\in\mathbb{D},\eeas  
we have 
$|\varphi_2’(\xi)|\leq k|\varphi_1’(\xi)|$ for $\xi\in\mathbb{D}$. In light of \textrm{Lemma \ref{le3}}, we have
\beas\sum_{n=1}^\infty n\frac{|b_n|^2}{(1-\gamma)^{2n}}\rho^{2n}\leq k^2 \sum_{n=1}^\infty n\frac{|a_n|^2}{(1-\gamma)^{2n}}\rho^{2n}\;\text{and}\;
\sum_{n=1}^\infty \frac{|b_n|^2}{(1-\gamma)^{2n}}\rho^n\leq k^2 \sum_{n=1}^\infty \frac{|a_n|^2}{(1-\gamma)^{2n}}\rho^n\eeas
for $|\xi|=\rho<1$, {\it i.e.}, $|\gamma+(1-\gamma)z|=\rho<1$.
\end{proof}
\begin{lem}\label{le2}Let $f$ be analytic in $\Omega_{\gamma}$, bounded by $1$ with the series expansion $f(z)=\sum_{n=0}^\infty a_n\left(z+\frac{\gamma}{1-\gamma}\right)^n$ in $\Omega_{\gamma}$. Then, $|a_n|\leq (1-\gamma)^n(1-|a_0|^2)$ for $n\geq 1$.\end{lem}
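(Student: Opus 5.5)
We prove Lemma \ref{le2} by reducing to the unit disk through the same change of variables used in the proof of Lemma \ref{le4}, and then applying the classical coefficient bound for self-maps of $\mathbb{D}$.

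Let $\Phi:\mathbb{D}\to\Omega_{\gamma}$ be given by $\Phi(\xi)=(\xi-\gamma)/(1-\gamma)$. This is an affine bijection of $\mathbb{D}$ onto $\Omega_{\gamma}$, since
\beas \left|\Phi(\xi)+\frac{\gamma}{1-\gamma}\right|=\frac{|\xi|}{1-\gamma}<\frac{1}{1-\gamma}.\eeas
Set $\varphi=f\circ\Phi$. Then $\varphi$ is analytic in $\mathbb{D}$ and $|\varphi|\leq 1$. Since $\Phi(\xi)+\gamma/(1-\gamma)=\xi/(1-\gamma)$, substituting into the expansion of $f$ gives
\beas \varphi(\xi)=\sum_{n=0}^\infty \frac{a_n}{(1-\gamma)^n}\,\xi^n\quad\text{for}\;\;\xi\in\mathbb{D}.\eeas
This is legitimate because the expansion of $f$ about the center $-\gamma/(1-\gamma)$ converges in the whole disk $\Omega_{\gamma}$, which is centered at that point.

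Next we apply Lemma \ref{le1} to $\varphi\in\mathcal{B}(\mathbb{D})$ at the point $\alpha=0$. This gives the Wiener-type bound
\beas \frac{|\varphi^{(n)}(0)|}{n!}\leq 1-|\varphi(0)|^2\quad\text{for}\;\;n\geq 1.\eeas
Since $\varphi^{(n)}(0)/n!=a_n/(1-\gamma)^n$ and $\varphi(0)=a_0$, this becomes $|a_n|/(1-\gamma)^n\leq 1-|a_0|^2$, which rearranges to the claimed inequality $|a_n|\leq(1-\gamma)^n(1-|a_0|^2)$.

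The only step requiring care is identifying the Taylor coefficients of $\varphi$ at $0$ with $a_n/(1-\gamma)^n$, which is the convergence point noted above. The degenerate case $|a_0|=1$ causes no difficulty: by the maximum principle $f$ is then constant, so $a_n=0$ for $n\geq1$ and the inequality holds trivially. Lemma \ref{le1} covers this case automatically as well.
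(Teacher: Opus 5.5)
Your proof is correct and matches the paper's argument: you pull back by the affine bijection $\Phi(\xi)=(\xi-\gamma)/(1-\gamma)$ of $\mathbb{D}$ onto $\Omega_{\gamma}$, identify the Taylor coefficients of $f\circ\Phi$ as $a_n/(1-\gamma)^n$, and apply Lemma \ref{le1} at $\alpha=0$. Your remarks on the convergence of the expansion on all of $\Omega_{\gamma}$ and on the degenerate case $|a_0|=1$ are correct, and they make explicit what the paper leaves implicit.
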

\begin{proof} 
Let $\Phi : \mathbb{D}\to\Omega_{\gamma}$ be a function defined by $\Phi(z)=(z-\gamma)/(1-\gamma)$. Since $f : \Omega_{\gamma}\to\ol{\mathbb{D}}$ is analytic, so
the composition $g=f\circ \Phi$ is analytic in $\mathbb{D}$. Thus, 
\beas g(z)=f\left(\frac{z-\gamma}{1-\gamma}\right)=\sum\limits_{n=0}^\infty \frac{a_n}{(1-\gamma)^n}z^n\eeas
with $g(0)=a_0$ and 
\beas a_n=\frac{g^{(n)}(0)}{n!}(1-\gamma)^n.\eeas
In view of \textrm{Lemma \ref{le1}}, we have 
\beas |a_n|\leq (1-\gamma)^n\left(1-|g(0)|^2\right)=(1-\gamma)^n\left(1-|a_0|^2\right).\eeas
\end{proof}
For the class of analytic functions $\mathcal{B}(\Omega_{\gamma})$, we obtain a sharp refined version of Bohr inequality as follows:
\begin{theo}\label{th0} Let $f$ be an analytic function in $\Omega_{\gamma}$ with $|f(z)|\leq 1$ on $\Omega_{\gamma}$. If $f(z)=\sum_{n=0}^\infty a_n\left(z+\frac{\gamma}{1-\gamma}\right)^n$ in $\Omega_{\gamma}$, then
\bea\label{rreq}\sum_{n=0}^\infty \frac{|a_n|}{(1-\gamma)^n}\rho^n+\left(\frac{1}{1+|a_0|}+\frac{\rho}{1-\rho}\right)\sum_{n=1}^\infty\frac{|a_n|^2}{(1-\gamma)^{2n}}\rho^{2n}\leq 1\eea 
for $\left|(1-\gamma)z+\gamma\right|=\rho\leq \rho_0=1/3$. The number $\rho_0$ is the best possible. \end{theo}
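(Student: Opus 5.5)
Composing with the affine map $\Phi(\xi)=(\xi-\gamma)/(1-\gamma)$ of Lemma~\ref{le2} reduces the statement to the refined Bohr inequality on $\mathbb{D}$ at radius $1/3$. The function $g=f\circ\Phi$ is analytic in $\mathbb{D}$ with $|g|\le 1$ and
\beas g(\xi)=\sum_{n=0}^\infty b_n\xi^n,\qquad b_n=\frac{a_n}{(1-\gamma)^n},\qquad \xi=(1-\gamma)z+\gamma .\eeas
The left side of \eqref{rreq} is then exactly
\beas \sum_{n=0}^\infty |b_n|\rho^n+\left(\frac{1}{1+|b_0|}+\frac{\rho}{1-\rho}\right)\sum_{n=1}^\infty |b_n|^2\rho^{2n},\qquad |\xi|=\rho .\eeas
So it suffices to show this is $\le 1$ for $\rho\le 1/3$ and every $g\in\mathcal{B}(\mathbb{D})$. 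This is Theorem D with $\gamma=0$, and I would simply invoke it. To keep the argument self-contained, I would also sketch its standard proof.

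The key step is the refined coefficient inequality of Ponnusamy--Vijayawickrema--Wirths. With $a=|b_0|$ it reads
\beas \sum_{n=1}^\infty |b_n|\rho^n+\left(\frac{1}{1+a}+\frac{\rho}{1-\rho}\right)\sum_{n=1}^\infty |b_n|^2\rho^{2n}\le (1-a^2)\frac{\rho}{1-\rho}. \eeas
I would prove it by a Cauchy--Schwarz argument. Apply the Parseval-type bound $\sum_{n\ge1}|b_n|^2\le(1-a^2)^2$ to the subordination $g_0=g-b_0\prec\omega(1-a^2)/(1-\bar b_0\omega)$. Combine it with the bound $|b_n|\le 1-a^2$, which is Lemma~\ref{le1} at $\alpha=0$ or equivalently Lemma~\ref{le2}. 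The whole sum is then bounded by the quadratic
\beas a+(1-a^2)\frac{\rho}{1-\rho}. \eeas
This is $\le 1$ if and only if $(1+a)\rho/(1-\rho)\le 1$. That holds for all $a\in[0,1)$ exactly when $\rho/(1-\rho)\le 1/2$, i.e.\ $\rho\le 1/3$. The case $a=1$ is trivial, since then $f$ is constant.

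For sharpness I would take $g(\xi)=(a-\xi)/(1-a\xi)$, pulled back as $f(z)=g((1-\gamma)z+\gamma)$. Then $b_0=a$ and $b_n=-(1-a^2)a^{n-1}$. The left side of \eqref{rreq} equals
\beas a+(1-a^2)\frac{\rho}{1-a\rho}+\left(\frac{1}{1+a}+\frac{\rho}{1-\rho}\right)\frac{(1-a^2)^2\rho^2}{1-a^2\rho^2}. \eeas
As $a\to1^-$, the difference between this expression and $1$ behaves like $(1-a)\bigl(-1+2\rho/(1-\rho)\bigr)+O((1-a)^2)$. This is positive when $\rho>1/3$, so $1/3$ cannot be improved.

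The main obstacle is the refined inequality for $g_0$. If citing Theorem D at $\gamma=0$ (Ponnusamy et al.) is acceptable, the proof is just the change of variables; otherwise the Cauchy--Schwarz/subordination estimate is where the care is needed.
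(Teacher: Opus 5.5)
Your main route is correct, and it differs from the paper's. The map $\xi=(1-\gamma)z+\gamma$ sends $\Omega_\gamma$ bijectively onto $\mathbb{D}$. So $f\mapsto f\circ\Phi$ is a bijection of $\mathcal{B}(\Omega_\gamma)$ onto $\mathcal{B}(\mathbb{D})$, and it carries the left side of \eqref{rreq} to the refined Bohr functional of $g$. The theorem, including sharpness, is therefore literally Theorem D at $\gamma=0$. Your pulled-back M\"obius extremal and the first-order expansion as $a\to1^-$ are also correct.

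The paper does not use any refined coefficient inequality. It applies $|b_n|\le 1-a^2$ (Lemma~\ref{le2}) to \emph{both} sums, obtaining
\[
a+\frac{(1-a^2)\rho}{1-\rho}+\Bigl(\frac{1}{1+a}+\frac{\rho}{1-\rho}\Bigr)\frac{(1-a^2)^2\rho^2}{1-\rho^2},
\]
and then shows by a convexity argument in $a$ that this is at most $1$ for $\rho\le 1/3$. Your reduction explains why the radius does not depend on $\gamma$ and gives sharpness for free, but it rests entirely on the citation. The paper's argument is self-contained and shows that the refined inequality of Ponnusamy--Vijayawickrema--Wirths is not needed at radius $1/3$.

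The self-contained sketch you offer for that refined inequality does not work. The bound $\sum_{n\ge1}|b_n|^2\le(1-a^2)^2$ is false: for $g(\xi)=(a-\xi)/(1-a\xi)$ the sum equals $1-a^2$, which is all that Parseval or subordination gives. With the correct bound, Cauchy--Schwarz yields only $\sqrt{1-a^2}\,\rho/\sqrt{1-\rho^2}$, which is far above the target $(1-a^2)\rho/(1-\rho)$ as $a\to1$. On the other hand, the termwise bound $|b_n|\le 1-a^2$ already uses up the whole target and leaves no room for the positive quadratic term. The M\"obius maps make the refined inequality tight to first order as $a\to1$, so neither ingredient can produce it.

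The actual proof sums Carlson's estimates against $\rho^n$:
\[
|b_{2k+1}|\le 1-\sum_{j=0}^{k}|b_j|^2,\qquad |b_{2k}|\le 1-\sum_{j=0}^{k-1}|b_j|^2-\frac{|b_k|^2}{1+|b_0|}.
\]
This produces exactly the weight $\rho^{2j}\bigl(\frac{1}{1+a}+\frac{\rho}{1-\rho}\bigr)$ on $|b_j|^2$.

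For this theorem, the simplest self-contained fix is the paper's crude estimate. The displayed bound is increasing in $\rho$, and at $\rho=1/3$ it equals
\[
1+(1-a)^2\Bigl(\frac{1+a}{8}+\frac{(1+a)^2}{16}-\frac12\Bigr).
\]
The bracket is increasing in $a$ and vanishes at $a=1$, so this is at most $1$.
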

Note that in \textrm{Theorem \ref{th0}} we obtain $\rho_0=1/3$, which is same as the classical Bohr radius $1/3$.\\[2mm]
\indent In the following, we obtain a sharp Bohr inequality for harmonic mapping on the shifted disk $\Omega_{\gamma}$.
\begin{theo}\label{th1} Let $f=h+\ol{g}$ be harmonic mapping in $\Omega_{\gamma}$ with $|h(z)|\leq 1$ on $\Omega_{\gamma}$. If $h(z)=\sum_{n=0}^\infty a_n\left(z+\frac{\gamma}{1-\gamma}\right)^n$ and $g(z)=\sum_{n=1}^\infty b_n\left(z+\frac{\gamma}{1-\gamma}\right)^n$ in $\Omega_{\gamma}$ and $|g’(z)|\leq k|h’(z)|$ for some $k\in[0,1)$, then
\beas\sum_{n=0}^\infty \frac{|a_n|}{(1-\gamma)^n}\rho^n+\sum_{n=1}^\infty\frac{|b_n|}{(1-\gamma)^n}\rho^n\leq 1\;\text{for}\;\left|(1-\gamma)z+\gamma\right|=\rho\leq \rho_0=1/(2k+3).\eeas
The number $\rho_0$ is the best possible.
\end{theo}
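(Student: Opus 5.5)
Transfer everything to the unit disk through $\Phi(\xi)=(\xi-\gamma)/(1-\gamma)$, as in the proofs of Lemmas \ref{le4} and \ref{le2}. Put $\varphi_1=h\circ\Phi$ and $\varphi_2=g\circ\Phi$. Then
\[
\varphi_1(\xi)=\sum_{n\ge0}A_n\xi^n,\qquad \varphi_2(\xi)=\sum_{n\ge1}B_n\xi^n,\qquad A_n=\frac{a_n}{(1-\gamma)^n},\quad B_n=\frac{b_n}{(1-\gamma)^n}.
\]
Here $|\varphi_1|\le1$ on $\mathbb{D}$ and $|\varphi_2'|\le k|\varphi_1'|$. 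With $\rho=|\xi|=|(1-\gamma)z+\gamma|$, the left-hand side of the claimed inequality is exactly $\sum_{n\ge0}|A_n|\rho^n+\sum_{n\ge1}|B_n|\rho^n$. So the statement reduces to the classical harmonic Bohr inequality on $\mathbb{D}$, and I will prove that directly with the tools already stated.

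\textbf{Step 1.} By Lemma \ref{le2} (equivalently Lemma \ref{le1} at $\alpha=0$), $|A_n|\le 1-|A_0|^2$ for $n\ge1$. Writing $a=|A_0|$, this gives
\[
\sum_{n\ge0}|A_n|\rho^n\le a+(1-a^2)\frac{\rho}{1-\rho}.
\]

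\textbf{Step 2.} I need a pointwise-type bound for $\sum_{n\ge1}|B_n|\rho^n$; this is the delicate step. Cauchy--Schwarz gives
\[
\sum_{n\ge1}|B_n|\rho^n\le\Big(\sum_{n\ge1}|B_n|^2\rho^n\Big)^{1/2}\Big(\sum_{n\ge1}\rho^n\Big)^{1/2}.
\]
Lemma \ref{le4} bounds the first factor by $k^2\sum_{n\ge1}|A_n|^2\rho^n$. By Step 1, $|A_n|^2\le(1-a^2)^2$, so
\[
\sum_{n\ge1}|B_n|\rho^n\le k(1-a^2)\frac{\rho}{1-\rho}.
\]
This is the same weight as in Step 1, which is what makes the argument close.

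\textbf{Step 3.} Combining the two steps, the left-hand side is at most
\[
a+(1+k)(1-a^2)\frac{\rho}{1-\rho}.
\]
This is $\le1$ exactly when $(1+k)(1+a)\rho/(1-\rho)\le1$. The worst case is $a\to1$, which gives the condition $\rho\le1/(2k+3)$. The case $a=1$ is trivial, since then $\varphi_1$ is constant, hence $\varphi_2'=0$, and the sum equals $1$.

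\textbf{Sharpness.} Take $\varphi_1(\xi)=(a-\xi)/(1-a\xi)$ and $\varphi_2(\xi)=k\lambda\big(\varphi_1(\xi)-a\big)$ with $|\lambda|=1$, and set $h=\varphi_1\circ\Phi^{-1}$, $g=\varphi_2\circ\Phi^{-1}$, where $\Phi^{-1}(z)=(1-\gamma)z+\gamma$. Then $|g'|=k|h'|$, and a direct computation gives
\[
\sum_{n\ge0}|A_n|\rho^n+\sum_{n\ge1}|B_n|\rho^n=a+(1+k)(1-a^2)\frac{\rho}{1-a\rho}.
\]
This exceeds $1$ iff $(1+k)(1+a)\rho>1-a\rho$. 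For any $\rho>1/(2k+3)$, this holds once $a$ is close enough to $1$, so $\rho_0$ is best possible.
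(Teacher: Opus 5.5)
Your proposal is correct and follows the paper's proof essentially step for step. Both arguments use the same ingredients: the coefficient bound from Lemma \ref{le2}; Cauchy--Schwarz together with the second inequality of Lemma \ref{le4}, giving $k(1-a^2)\rho/(1-\rho)$; the reduction to $a+(1+k)(1-a^2)\rho/(1-\rho)\le 1$; and the extremal pair $h=\psi\circ\Phi^{-1}$, $g=k(h-a)$, where your case $\lambda=1$ is exactly the paper's $g_2$. The only difference is cosmetic: you settle Step 3 by dividing by $1-a$, whereas the paper argues through the monotonicity of $\xi(a)$.
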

Letting $k \to 1$ in \textrm{Theorem \ref{th1}}, we obtain the following sharp harmonic analogues of the classical Bohr inequality on the shifted disk $\Omega_{\gamma}$.
\begin{cor} Let $f=h+\ol{g}$ be harmonic mapping in $\Omega_{\gamma}$ with $|h(z)|\leq 1$ on $\Omega_{\gamma}$. If $h(z)=\sum_{n=0}^\infty a_n\left(z+\frac{\gamma}{1-\gamma}\right)^n$, $g(z)=\sum_{n=1}^\infty b_n\left(z+\frac{\gamma}{1-\gamma}\right)^n$ in $\Omega_{\gamma}$ and $f(z)$ is sense-preserving in $\Omega_{\gamma}$, then
\beas\sum_{n=0}^\infty \frac{|a_n|}{(1-\gamma)^n}\rho^n+\sum_{n=1}^\infty\frac{|b_n|}{(1-\gamma)^n}\rho^n\leq 1\;\text{for}\;\left|(1-\gamma)z+\gamma\right|=\rho\leq \rho_0=1/5.\eeas
The number $\rho_0$ is the best possible.
\end{cor}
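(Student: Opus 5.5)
The plan is to rerun the argument behind Theorem \ref{th1} directly with $k=1$, working through $\Phi(\xi)=(\xi-\gamma)/(1-\gamma)$. Theorem \ref{th1} cannot be quoted as it stands: sense-preservation only gives $|\omega_f|<1$ pointwise, not a uniform bound $k<1$. Put $\xi=(1-\gamma)z+\gamma$, $c_n=a_n/(1-\gamma)^n$ and $d_n=b_n/(1-\gamma)^n$. Then $\varphi_1=h\circ\Phi=\sum c_n\xi^n\in\mathcal{B}(\mathbb{D})$ and $\varphi_2=g\circ\Phi=\sum_{n\ge1} d_n\xi^n$. Sense-preservation gives $\varphi_2'=\omega\varphi_1'$ with $\omega=\omega_f\circ\Phi$ analytic and $|\omega|<1$ in $\mathbb{D}$.

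\textbf{Step 1 (the $k=1$ case of Lemma \ref{le4}).} I need
\[\sum_{n\ge1}|d_n|^2\rho^n\le \sum_{n\ge1}|c_n|^2\rho^n .\]
The proof of Lemma \ref{le3} is a Littlewood subordination/Parseval argument applied to $\varphi_2'=\omega\varphi_1'$. That argument only uses $|\omega|\le k$ with $k\le1$, so I expect it to go through verbatim at $k=1$. As a fallback, apply Lemma \ref{le3} on the disk $|\xi|<r$ after rescaling $\xi\mapsto r\xi$. On that compact subdisk $\sup|\omega|=k_r<1$, which gives the inequality with constant $k_r^2\le1$ for all $\rho<r$. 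Then let $r\to1^-$. This justification is the only genuinely delicate point.

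\textbf{Step 2 (estimates).} Write $a=|a_0|$. Lemma \ref{le2} gives $|c_n|\le 1-a^2$, so $\sum|c_n|\rho^n\le (1-a^2)\rho/(1-\rho)$. By Cauchy--Schwarz and Step 1,
\[\sum_{n\ge1}|d_n|\rho^n\le\Big(\sum|d_n|^2\rho^n\Big)^{1/2}\Big(\sum\rho^n\Big)^{1/2}\le\Big((1-a^2)^2\tfrac{\rho}{1-\rho}\Big)^{1/2}\Big(\tfrac{\rho}{1-\rho}\Big)^{1/2}=(1-a^2)\tfrac{\rho}{1-\rho}.\]
Hence the left side of the claimed inequality is at most
\[a+2(1-a^2)\frac{\rho}{1-\rho}.\]
This is $\le1$ iff $2(1+a)\rho\le 1-\rho$. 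Since $a\le1$, that holds whenever $4\rho\le1-\rho$, i.e. $\rho\le1/5$.

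\textbf{Step 3 (sharpness).} Take $h(z)=(a-\xi)/(1-a\xi)$ with $\xi=(1-\gamma)z+\gamma$ and $a\in[0,1)$, and $g=k(h-a)$ with $k\in[0,1)$. Then $\omega_f\equiv k$, so $f$ is sense-preserving, and $g$ has no constant term, as required. A direct computation gives the majorant sum
\[a+(1+k)(1-a^2)\frac{\rho}{1-a\rho}.\]
This exceeds $1$ iff $(1+k)(1+a)\rho>1-a\rho$. Letting $a\to1^-$ and then $k\to1^-$ shows it fails for every $\rho>1/5$, so $\rho_0=1/5$ is best possible.
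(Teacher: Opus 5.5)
Your proposal is correct and is essentially the paper's argument. The paper obtains this corollary by letting $k\to1$ in Theorem \ref{th1}, and you rerun exactly that proof directly at $k=1$: Lemma \ref{le2}, the $\rho^n$-weighted inequality of Lemma \ref{le4} with Cauchy--Schwarz, and the extremal pair $h=(a-\xi)/(1-a\xi)$, $g=k(h-a)$ with $a\to1^-$ and then $k\to1^-$. Your remark that sense-preservation gives only $|\omega_f|<1$ pointwise, so that the $k=1$ case of Lemma \ref{le3} must be justified, correctly fills in a step the paper leaves implicit; either route works, directly since that lemma's proof uses only $|\varphi_2'|\le|\varphi_1'|$, or by your exhaustion (where the rescaled inequality holds for $\rho<r^2$ rather than $\rho<r$, which is harmless).
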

In the following results, we obtain several sharp versions of Bohr inequalities in improved form for harmonic mapping on the shifted disk $\Omega_{\gamma}$.
\begin{theo}\label{th2} Let $f=h+\ol{g}$ be harmonic mapping in $\Omega_{\gamma}$ with $|h(z)|\leq 1$ on $\Omega_{\gamma}$. If $h(z)=\sum_{n=0}^\infty a_n\left(z+\frac{\gamma}{1-\gamma}\right)^n$, $g(z)=\sum_{n=1}^\infty b_n\left(z+\frac{\gamma}{1-\gamma}\right)^n$ in $\Omega_{\gamma}$ and $|g’(z)|\leq k|h’(z)|$ for some $k\in[0,1)$, then
\beas\sum_{n=0}^\infty \frac{|a_n|}{(1-\gamma)^n}\rho^n+\sum_{n=1}^\infty\frac{|b_n|}{(1-\gamma)^n}\rho^n+ \frac{2(k+2)^2(k+1)^2}{(2k+3)^2}\left(\frac{S_\rho^\gamma(h)}{\pi}\right)\leq 1\eeas 
for $\left|(1-\gamma)z+\gamma\right|=\rho\leq \rho_0=1/(2k+3)$, where $S_\rho^\gamma(h)$ is the area of the image $\mathbb{D}(\gamma/(\gamma-1);\rho/(1-\gamma))$ under the mapping $h$.
The numbers $2(k+2)^2(k+1)^2/(2k+3)^2$ and $\rho_0$ cannot be replaced by a larger value.
\end{theo}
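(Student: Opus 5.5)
We will transfer everything to the unit disk via the map $\Phi(\xi)=(\xi-\gamma)/(1-\gamma)$ of $\mathbb{D}$ onto $\Omega_\gamma$, exactly as in the proofs of Lemma \ref{le4} and Lemma \ref{le2}. Put $\varphi_1=h\circ\Phi$ and $\varphi_2=g\circ\Phi$. Then $\varphi_1(\xi)=\sum A_n\xi^n$ and $\varphi_2(\xi)=\sum B_n\xi^n$ with $A_n=a_n/(1-\gamma)^n$ and $B_n=b_n/(1-\gamma)^n$. Also $|\varphi_1|\le 1$ and $|\varphi_2'|\le k|\varphi_1'|$ on $\mathbb{D}$. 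The condition $|(1-\gamma)z+\gamma|=\rho$ is just $|\xi|=\rho$. The disk $\mathbb{D}(\gamma/(\gamma-1);\rho/(1-\gamma))$ is $\Phi(\mathbb{D}_\rho(0))$, so
\[S_\rho^\gamma(h)/\pi=\sum_{n\ge1} n|A_n|^2\rho^{2n}.\]
Thus the statement reduces to the unit-disk inequality
\[\sum|A_n|\rho^n+\sum|B_n|\rho^n+C_k\sum n|A_n|^2\rho^{2n}\le 1,\qquad C_k=\frac{2(k+2)^2(k+1)^2}{(2k+3)^2}.\]

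\textbf{The three bounds.} Write $a=|A_0|=|a_0|$. Lemma \ref{le2} gives $|A_n|\le 1-a^2$, so
\[\sum_{n\ge1}|A_n|\rho^n\le (1-a^2)\frac{\rho}{1-\rho},\qquad \sum n|A_n|^2\rho^{2n}\le (1-a^2)^2\frac{\rho^2}{(1-\rho^2)^2}.\]
For the $g$-part, the second inequality of Lemma \ref{le4} and Cauchy--Schwarz give
\[\sum|B_n|\rho^n\le\Big(\sum|B_n|^2\rho^n\Big)^{1/2}\Big(\sum\rho^n\Big)^{1/2}\le k(1-a^2)\frac{\rho}{1-\rho}.\]
Hence the left side is at most
\[\Psi(a,\rho)=a+(1+k)(1-a^2)\frac{\rho}{1-\rho}+C_k(1-a^2)^2\frac{\rho^2}{(1-\rho^2)^2},\]
which is increasing in $\rho$, so it suffices to take $\rho=\rho_0=1/(2k+3)$.

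\textbf{The inequality at $\rho_0$.} At $\rho_0$ we have $\rho_0/(1-\rho_0)=1/(2k+2)$ and $1-\rho_0^2=4(k+1)(k+2)/(2k+3)^2$. These values make the coefficients exactly $1/2$ and $1/8$, so
\[\Psi(a,\rho_0)=a+\tfrac12(1-a^2)+\tfrac18(1-a^2)^2.\]
Factor out $1-a$ and set $s=1+a\in[1,2]$. Then $\Psi\le1$ is equivalent to $s^3-2s^2-4s+8\ge0$, which holds because $s^3-2s^2-4s+8=(s-2)^2(s+2)$.

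\textbf{Sharpness.} Take $\varphi_1(\xi)=(a-\xi)/(1-a\xi)$ and $\varphi_2=k(\varphi_1-a)$, and set $h=\varphi_1\circ\Phi^{-1}$ and $g=\varphi_2\circ\Phi^{-1}$. Then $|g'|=k|h'|$, $|A_n|=(1-a^2)a^{n-1}$ and $|B_n|=k|A_n|$. The left side becomes
\[a+(1+k)(1-a^2)\frac{\rho}{1-a\rho}+C(1-a^2)^2\frac{\rho^2}{(1-a^2\rho^2)^2}.\]
For $\rho>\rho_0$, letting $a\to1^-$ the first-order term in $1-a$ is $(1-a)\big(2(1+k)\rho/(1-\rho)-1\big)>0$. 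So the expression exceeds $1$, and $\rho_0$ cannot be enlarged.

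For the constant, fix $\rho=\rho_0$ and replace $C_k$ by $C>C_k$. The first-order terms in $1-a$ cancel, since $2(1+k)\rho_0/(1-\rho_0)=1$. The second-order coefficient combines the expansion of $(1-a^2)/(1-a\rho_0)$ with the area term $\approx 4C\rho_0^2(1-a)^2/(1-\rho_0^2)^2$; it equals zero exactly at $C=C_k$ and becomes positive for $C>C_k$. Hence the expression exceeds $1$ for $a$ near $1$.

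The main obstacle is this second-order expansion: the cancellation must be computed carefully to confirm that the coefficient vanishes precisely at $C_k$.
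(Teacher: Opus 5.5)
Your proof of the inequality and of the optimality of $\rho_0$ is correct and follows the paper. You use the same three estimates (Lemma \ref{le2}, Cauchy--Schwarz with Lemma \ref{le4}, and the area bound), then monotonicity in $\rho$ and evaluation at $\rho_0$. Your factorization $s^3-2s^2-4s+8=(s-2)^2(s+2)$ is a tidier substitute for the paper's monotonicity argument for $\Phi(x)$. Your $a\to1^-$ argument for $\rho>\rho_0$ is more careful than the paper's, which infers $F_3(a,\rho)>0$ merely from $F_3(a,\rho)>F_3(a,\rho_0)$ and $F_3(a,\rho_0)\to0$.

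The gap is the sharpness of the constant, and it cannot be closed. Do the expansion you postponed. Take $t=1-a$ and $r=\rho_0$, and use the identities $(1+k)r=(1-r)/2$, $r/(1-r)=1/(2(k+1))$ and $4r^2/(1-r^2)^2=1/(2C_k)$. Your test function then gives
\[
a+(1+k)(1-a^2)\frac{r}{1-ar}+C(1-a^2)^2\frac{r^2}{(1-a^2r^2)^2}
=1+t^2\left(\frac{C}{2C_k}-\frac{k+2}{2(k+1)}\right)+O(t^3).
\]
So the second-order coefficient vanishes at $C=\frac{k+2}{k+1}C_k=\frac{2(k+1)(k+2)^3}{(2k+3)^2}$, not at $C_k$. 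At $C=C_k$ it equals $-\frac{1}{2(k+1)}<0$, and this family excludes no $C<\frac{k+2}{k+1}C_k$.

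The discrepancy comes from the crude estimate $|A_n|\le 1-a^2$. For the M\"obius map, $(1+k)\sum_{n\ge1}|A_n|r^n=(1+k)(1-a^2)r/(1-ar)$. This falls short of the bound $(1+k)(1-a^2)r/(1-r)$ by $t^2/(2(k+1))+O(t^3)$. Hence $\Psi$ is not attained to second order, and $C_k$ is only the threshold for $\Psi$, not for actual functions.

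In fact the claim is false. For $k=0$ one has $g\equiv 0$, and after your change of variables the statement is exactly the unit-disk setting of Kayumov and Ponnusamy. They proved $\sum|A_n|\rho^n+\frac{16}{9}\sum n|A_n|^2\rho^{2n}\le 1$ for $\rho\le 1/3$, with $16/9=2C_0$ sharp. So $C_0=8/9$ can be enlarged.

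The paper's own proof does not establish this part either: its extremal computation fixes $K=C_k$ and only tests $\rho_0$. What your argument legitimately yields is the inequality, the optimality of $\rho_0$, and the upper bound $\frac{k+2}{k+1}C_k$ for the best constant.
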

Letting $k \to 1$ in \textrm{Theorem \ref{th2}}, we obtain the following sharp version of Bohr inequality in improved form for harmonic mapping on the shifted disk $\Omega_{\gamma}$.
\begin{cor} Let $f=h+\ol{g}$ be harmonic mapping in $\Omega_{\gamma}$ with $|h(z)|\leq 1$ on $\Omega_{\gamma}$. If $h(z)=\sum_{n=0}^\infty a_n\left(z+\frac{\gamma}{1-\gamma}\right)^n$, $g(z)=\sum_{n=1}^\infty b_n\left(z+\frac{\gamma}{1-\gamma}\right)^n$ in $\Omega_{\gamma}$ and $f(z)$ is sense-preserving in $\Omega_{\gamma}$, then
\beas\sum_{n=0}^\infty \frac{|a_n|}{(1-\gamma)^n}\rho^n+\sum_{n=1}^\infty\frac{|b_n|}{(1-\gamma)^n}\rho^n+ \frac{72}{25}\frac{S_\rho^\gamma(h)}{\pi}\leq 1\eeas 
for $\left|(1-\gamma)z+\gamma\right|=\rho\leq \rho_0=1/5$, where $S_\rho^\gamma(h)$ is the area of the image $\mathbb{D}(\gamma/(\gamma-1);\rho/(1-\gamma))$ under the mapping $h$.
The numbers $72/25$ and $\rho_0$ cannot be replaced by a larger value.
\end{cor}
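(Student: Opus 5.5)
The plan is to rerun the proof of Theorem \ref{th2} with $k=1$, not to invoke Theorem \ref{th2} as a black box. Letting $k\to1^-$ directly is not enough, for two reasons. First, sense-preservation only gives $|\omega_f|=|g'/h'|<1$ pointwise, with no uniform bound $k<1$ on $\Omega_\gamma$. Second, the constant $c(k)=2(k+2)^2(k+1)^2/(2k+3)^2$ increases in $k$, from $8/9$ at $k=0$ to $72/25$ at $k=1$. So a bound obtained for some $k<1$ only yields the weaker constant $c(k)<72/25$ at $\rho=1/5$.

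\textbf{Step 1: the coefficient estimates hold with $k=1$.} Since $f$ is sense-preserving, $h'\neq0$ and $\omega=g'/h'$ is analytic in $\Omega_\gamma$ with $|\omega|<1$, so $|g'(z)|\le|h'(z)|$ there. Transfer to $\mathbb{D}$ via $\Phi(\xi)=(\xi-\gamma)/(1-\gamma)$ as in Lemma \ref{le4}. Then $\varphi_2'=(\omega\circ\Phi)\,\varphi_1'$ is a product of $\varphi_1'$ with a Schwarz-type multiplier bounded by $1$. The Littlewood subordination / Parseval argument behind Lemma \ref{le3} then gives
\[
\sum_{n\ge1}\frac{|b_n|^2}{(1-\gamma)^{2n}}\rho^n\le\sum_{n\ge1}\frac{|a_n|^2}{(1-\gamma)^{2n}}\rho^n ,
\]
together with the analogous weighted inequality with factors $n\rho^{2n}$. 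These are exactly the conclusions of Lemma \ref{le4} with $k=1$. If one prefers to quote Lemma \ref{le3} verbatim, apply it with any $k'\in(0,1)$ to $k'g$ in place of $g$, and then let $k'\to1^-$ in the resulting coefficient inequality. This limit is legitimate because only the fixed coefficients $b_n$ are involved, and it yields the same bound.

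\textbf{Step 2: the chain of estimates.} I would follow the proof of Theorem \ref{th2} line by line with $k=1$. Write $A_n=|a_n|/(1-\gamma)^n$, $B_n=|b_n|/(1-\gamma)^n$ and $a=|a_0|$.
\begin{itemize}
\item By Lemma \ref{le2}, $A_n\le 1-a^2$.
\item By Cauchy--Schwarz and Step 1, $\sum B_n\rho^n\le\big(\sum A_n^2\rho^n\big)^{1/2}\big(\sum\rho^n\big)^{1/2}$.
\item The area term satisfies $S_\rho^\gamma(h)/\pi=\sum nA_n^2\rho^{2n}\le(1-a^2)^2\rho^2/(1-\rho^2)^2$.
\end{itemize}
The left-hand side is then bounded by an explicit function $\Psi(a,\rho)$. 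This function is increasing in $\rho$, so it suffices to take $\rho=1/5$. There one must show
\[
\Psi(a,1/5)=a+\frac{1-a^2}{4}+\frac{1-a^2}{4}+\frac{72}{25}\cdot\frac{(1-a^2)^2}{576}\le1 .
\]
This reduces to $(1-a)^2\ge0$ after collecting terms. Indeed, $72/25\cdot(1/25)/(24/25)^2=1/8$, and the expression becomes $1-(1-a)^2/2+(1-a^2)^2/8\le1$, using $(1+a)^2\le4$.

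\textbf{Step 3: sharpness.} I would use the extremal family of Theorem \ref{th2} transported by $\Phi^{-1}$:
\[
h(z)=\frac{a-\xi}{1-a\xi},\qquad g(z)=\lambda k\,\bigl(h(z)-a\bigr),\qquad \xi=(1-\gamma)z+\gamma,\quad |\lambda|=1,
\]
with $k\to1^-$, so that $f$ is sense-preserving. Expanding at $\rho$ and letting $a\to1^-$ shows two things. For $\rho>1/5$ the left side exceeds $1$. At $\rho=1/5$, any constant larger than $72/25$ fails, by comparing the coefficients of $(1-a)$ as $a\to1^-$.

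\textbf{Main obstacle.} I expect the main obstacle to be Step 1, justifying Lemma \ref{le4} at the endpoint $k=1$, together with keeping the constant exactly $72/25$. This is why I avoid the naive limit in $k$.
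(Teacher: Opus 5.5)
Your Steps 1--2 are correct. They are the proof of Theorem \ref{th2} run at $k=1$, which is all the paper's ``letting $k\to1$'' amounts to: Lemma \ref{le3} only integrates $|g'|^2\le k^2|h'|^2$ over circles, so $k=1$ is allowed. Your family with $a,k\to1^-$ does show that $\rho_0=1/5$ is sharp. Two small corrections. First, the limit in $k$ is in fact enough: Theorem \ref{th2} applies to $h+\ol{kg}$ for every $k<1$ and every $\rho\le 1/5<1/(2k+3)$, and $S_\rho^\gamma(h)$ does not involve $g$. Second, your display for $\Psi(a,1/5)$ should contain $\frac{72}{25}\cdot\frac{25(1-a^2)^2}{576}$; your value $1/8$ is right.

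\textbf{The gap: optimality of $72/25$ in Step 3.} Write $d_n=|a_n|/(1-\gamma)^n$ and $e_n=|b_n|/(1-\gamma)^n$. Let $L$ be the left-hand side with a constant $C$ in place of $72/25$. For your family $d_n=(1-a^2)a^{n-1}$ and $e_n=kd_n$, so at $\rho=1/5$
\[
1-L=\frac{1-a}{5-a}\bigl(3(1-a)+(1-k)(1+a)\bigr)-\frac{25C(1-a)^2(1+a)^2}{(25-a^2)^2}\ \ge\ (1-a)^2\Bigl(\frac{3}{5-a}-\frac{25C(1+a)^2}{(25-a^2)^2}\Bigr).
\]
The coefficient of $(1-a)$ is $(1-k)(1+a)/(5-a)\ge 0$. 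It never involves $C$, so comparing coefficients of $(1-a)$ tells you nothing about $C$. The competition happens at order $(1-a)^2$, where the bracket tends to $\frac34-\frac{25C}{144}$. So this family violates the inequality only when $C>108/25$. For $C=3$, say, the bracket is at least $\frac35-\frac{300}{576}>0$ for all $a,k\in[0,1]$. The discrepancy is built into the upper bound. Using $d_n\le 1-a^2$ for every $n$ at once leaves the deficit $1-a-\frac{1-a^2}{2}=\frac{(1-a)^2}{2}$. The M\"obius maps leave $\frac{3(1-a)^2}{5-a}\to\frac34(1-a)^2$, and the ratio $3/2$ is exactly $108/72$.

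\textbf{This cannot be repaired, because the optimality claim is false.} Rotate so that $a_0\ge0$ and write $h\circ\Phi=T\circ\omega$ with $T(w)=(a+w)/(1+aw)$ and $\omega(0)=0$. Littlewood's subordination theorem then gives $\sum_{n\ge1}d_n^2\rho^n\le(1-a^2)^2\rho/(1-a^2\rho)$. Combined with Cauchy--Schwarz and your Step 1, this gives $\sum_{n\ge1}e_n\rho^n\le(1-a^2)\rho/\sqrt{(1-\rho)(1-a^2\rho)}$. With weights $(\rho/a)^n$ it also gives $\sum_{n\ge1}d_n\rho^n\le(1-a^2)\rho/(1-a\rho)$ for $a\ge\rho$. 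Take $\rho=1/5$ and $a\ge 1/5$, and keep the crude area bound. Then $1-L$ is at least $(1-a)^2$ times
\[
\frac{3}{2(5-a)}+\frac{2+a}{\sqrt{5-a^2}\,\bigl(\sqrt{5-a^2}+1+a\bigr)}-\frac{25C(1+a)^2}{576}.
\]
For $C=3$, the first two terms exceed $0.55$ on $[1/5,1]$, while the last is at most $300/576<0.53$. For $a<1/5$, the crude bounds give $L\le\frac15+\frac12+\frac{75}{576}<1$. Since $L$ increases with $\rho$, the constant $72/25$ can be replaced by $3$, for all $\rho\le 1/5$.

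For comparison, at $k=0$ the same scheme becomes, after the change of variable $\Phi$, the Kayumov--Ponnusamy theorem, whose sharp constant is $16/9$ rather than $c(0)=8/9$. The paper's own sharpness argument only tests $\rho>\rho_0$ with the stated constant, so it does not support this part of the statement either. You can prove the inequality and the sharpness of $\rho_0$, but not that $72/25$ is optimal.
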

\begin{theo}\label{th3} Let $f = h + \ol g$ be a harmonic mapping in $\Omega_{\gamma}$, with $\left|h(z)\right| \leq 1$ on $\Omega_{\gamma}$. If $h(z)=\sum_{n=0}^\infty a_n\left(z+\frac{\gamma}{1-\gamma}\right)^n$, $g(z)=\sum_{n=1}^\infty b_n\left(z+\frac{\gamma}{1-\gamma}\right)^n$ in $\Omega_{\gamma}$ and $|g'(z)| \leq k|h'(z)|$ for some $k \in [0, 1)$, then 
\beas\sum_{n=0}^\infty\frac{|a_n|}{(1-\gamma)^n}\rho^n+\sum_{n=1}^\infty\frac{|b_n|}{(1-\gamma)^n}\rho^n+\frac{2(k+2)^2(k+1)}{(1-k)(2k+3)^2}\left(\frac{S_\rho^\gamma(f)}{\pi}\right)\leq 1, \eeas
for $\left|(1-\gamma)z+\gamma\right|=\rho \leq \rho_0=1/(2k+3)$, where $S_\rho^\gamma(f)$ is the area of the image of the disk $\mathbb{D}(\gamma/(\gamma-1);\rho/(1-\gamma))$ under the mapping $f$.
The numbers $2(k+2)^2(k+1)/((1-k)(2k+3)^2)$ and $\rho_0$ cannot be replaced by a larger value.
\end{theo}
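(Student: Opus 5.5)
Throughout I pull everything back to the unit disk by $\Phi(\xi)=(\xi-\gamma)/(1-\gamma)$, as in the proofs of Lemma \ref{le4} and Lemma \ref{le2}. Put $\varphi_1=h\circ\Phi$ and $\varphi_2=g\circ\Phi$. Their Taylor coefficients at $0$ are $A_n=a_n/(1-\gamma)^n$ and $B_n=b_n/(1-\gamma)^n$. They satisfy $|\varphi_1|\le 1$ and $|\varphi_2'|\le k|\varphi_1'|$ on $\mathbb{D}$. The map $\Phi$ is a similarity, and it sends $|\xi|<\rho$ onto $\mathbb{D}(\gamma/(\gamma-1);\rho/(1-\gamma))$. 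So $S_\rho^\gamma(f)$ is the area of the image of $|\xi|<\rho$ under $F=\varphi_1+\ol{\varphi_2}$. Since $F$ is sense-preserving, integrating the Jacobian gives
\[
\frac{S_\rho^\gamma(f)}{\pi}=\sum_{n=1}^\infty n\left(|A_n|^2-|B_n|^2\right)\rho^{2n}.
\]
The left-hand side of the theorem becomes $\sum|A_n|\rho^n+\sum|B_n|\rho^n+c_k\,S_\rho^\gamma(f)/\pi$, where $c_k=2(k+2)^2(k+1)/((1-k)(2k+3)^2)$. So the whole problem is a statement about $\varphi_1,\varphi_2$ on $\mathbb{D}$.

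\emph{Step 1 (Bohr part).} Write $a=|a_0|$. Lemma \ref{le2} gives $|A_n|\le 1-a^2$, so $\sum_{n\ge1}|A_n|\rho^n\le(1-a^2)\rho/(1-\rho)$. For the $B_n$, I would use Cauchy--Schwarz together with the second inequality of Lemma \ref{le4}, as in the proof of Theorem F, to obtain $\sum|B_n|\rho^n\le k(1-a^2)\rho/(1-\rho)$. At $\rho=\rho_0=1/(2k+3)$ we have $\rho/(1-\rho)=1/(2k+2)$. Hence the Bohr sum is at most $a+(1-a^2)/2$, leaving a deficit of $(1-a)^2/2$. This part already shows that $\rho_0$ is admissible when the area term is dropped.

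\emph{Step 2 (area part).} I would bound the area by a quantity of order $(1-a)^2$. The plan is to replace the crude coefficient bounds by the Schwarz--Pick type estimate
\[
\sum_{n\ge1} n|A_n|^2\rho^{2n}\le \frac{\rho^2(1-a^2)^2}{(1-a^2\rho^2)^2}.
\]
For the $g$-part I would use the first inequality of Lemma \ref{le4} in both directions: it gives $(1-k^2)\sum n|A_n|^2\rho^{2n}\le S_\rho^\gamma(f)/\pi$, and it also controls $\sum|B_n|\rho^n$ through the area. The intention is to trade part of the $g$-contribution in Step 1 against the area term. This is where the factor $1/(1-k)$ in $c_k$ should come from, as opposed to the constant of Theorem \ref{th2}.

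After this trade-off, the inequality reduces to a one-variable inequality $\Psi(a)\le 0$ on $[0,1)$. I expect its extremal case to be $a\to1^-$. There, $(1+a)^2\rho_0^2/(1-a^2\rho_0^2)^2\to 4\rho_0^2/(1-\rho_0^2)^2=(2k+3)^2/(4(k+1)^2(k+2)^2)$, which explains the factors $(k+2)^2(k+1)$ and $(2k+3)^2$ in $c_k$.

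\emph{Main obstacle.} The hard part is matching exactly the constant $c_k$, with its $1/(1-k)$. A naive estimate of the form $S_\rho^\gamma(f)\le S_\rho^\gamma(h)$ only reproduces the smaller constant of Theorem \ref{th2}. Gaining the extra factor requires splitting the $B_n$-sum optimally between the Bohr part and the area part, and then checking that $\Psi$ is monotone in $a$. I would first try writing $\sum|B_n|\rho^n\le\bigl(\sum n|B_n|^2\rho^{2n}\bigr)^{1/2}\bigl(\sum\rho^{2n}/n\bigr)^{1/2}$ and see whether this yields the needed $(1-k)$-gain.

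\emph{Sharpness.} I would test the functions $h(z)=\varphi_1(\gamma+(1-\gamma)z)$ with $\varphi_1(\xi)=(a-\xi)/(1-a\xi)$, and $g=k\lambda h$ up to a constant, with $|\lambda|=1$. Expanding the left side at $\rho=\rho_0$ for $a\to1^-$, any larger constant or any $\rho>\rho_0$ should make it exceed $1$.
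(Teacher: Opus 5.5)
\textbf{The gap is fatal: the stated constant cannot be reached because the inequality is false for $k$ close to $1$.}

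The step that cannot be completed is Step 2, where you hope that trading part of the $g$-contribution against the area will produce the factor $1/(1-k)$ in $c_k$. You treat $a\to1^-$ with $|B_n|\approx k|A_n|$ as the only critical regime; there the area does shrink by $1-k^2$. But $|g'|\le k|h'|$ is only an upper bound, so $g\equiv0$ is admissible for every $k$. Then there is nothing to trade, and $S_\rho^\gamma(f)=S_\rho^\gamma(h)$.

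Concretely, take $h(z)=\gamma+(1-\gamma)z$ and $g\equiv 0$. This $h$ maps $\Omega_\gamma$ onto $\mathbb{D}$, with $a_0=0$, $a_1=1-\gamma$ and $a_n=0$ for $n\ge2$. It sends $\mathbb{D}(\gamma/(\gamma-1);\rho/(1-\gamma))$ onto $|w|<\rho$, so $S_\rho^\gamma(f)/\pi=\rho^2$. At $\rho=\rho_0$ the left-hand side equals
\[
\frac{1}{2k+3}+\frac{2(k+2)^2(k+1)}{(1-k)(2k+3)^4}.
\]
This is about $1.38$ for $k=0.95$, and it tends to $+\infty$ as $k\to1^-$.

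Requiring $\sup|g'/h'|=k$ does not rescue the statement. Add $g(z)=\frac{k}{N}(\gamma+(1-\gamma)z)^N$ and let $N\to\infty$; the left-hand side tends to the same value. So the one-variable inequality $\Psi(a)\le 0$ you expect is false, and it already fails at $a=0$, not as $a\to1^-$.

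\textbf{What your argument actually proves, and how this compares with the paper.} Steps 1 and 2, together with $S_\rho^\gamma(f)\le S_\rho^\gamma(h)$ and your Schwarz--Pick area bound, give the inequality with the constant $2(k+1)^2(k+2)^2/(2k+3)^2$ of Theorem \ref{th2}. Note that $c_k$ is exactly this constant divided by $1-k^2$. That division would be justified only if $|b_n|=k|a_n|$ for all $n$, i.e.\ $g'=k\lambda h'$ with $|\lambda|=1$.

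The paper's own argument does not bridge this either. It bounds the area by $(1+k)\sum n|A_n|^2\rho^{2n}$; its formula (\ref{a9}) also carries a spurious factor $(1-\gamma)^{-2}$, which you correctly avoid. Its final condition is
\[
K\le\frac{(2k+4)^2(2k+2)^2}{8(1+k)(2k+3)^2}=\frac{2(k+2)^2(k+1)}{(2k+3)^2},
\]
which contains no factor $1/(1-k)$.

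\textbf{Two further steps in your plan would also fail.}
\begin{enumerate}
\item Your Cauchy--Schwarz bound $\sum|B_n|\rho^n\le\bigl(\sum n|B_n|^2\rho^{2n}\bigr)^{1/2}\bigl(\sum\rho^{2n}/n\bigr)^{1/2}$ has mismatched powers. Its right side controls $\sum|B_n|\rho^{2n}$, and the bound is already false when $g$ is a multiple of $\xi$. The correctly matched version involves $\sum 1/n$, which diverges.
\item Your sharpness family cannot certify $c_k$. For $g=k\lambda(h-a_0)$, the exact deficit at $\rho_0$ is $(1-a)^2(k+2)/(2k+3-a)$, while the area is $(1-k^2)(1-a^2)^2\rho_0^2/(1-a^2\rho_0^2)^2$. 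Letting $a\to1^-$ therefore only forces $c\le 2(k+2)^3/\bigl((1-k)(2k+3)^2\bigr)$, which is strictly larger than $c_k$.
\end{enumerate}
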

\section{Proofs of the main results}
\begin{proof}[\bf{Proof of Theorem \ref{th0}}]  
Let $f(z)$ be analytic on $\Omega_{\gamma}$ with $|f(z)|\leq 1$. In view of \textrm{Lemma \ref{le2}}, we have $|a_n|\leq (1-\gamma)^n (1-|a_0|^2)$ for $n\geq 1$. Then
\beas &&\sum_{n=0}^\infty \frac{|a_n|}{(1-\gamma)^n}\rho^n+\left(\frac{1}{1+|a_0|}+\frac{\rho}{1-\rho}\right)\sum_{n=1}^\infty\frac{|a_n|^2}{(1-\gamma)^{2n}}\rho^{2n}\\
&&\leq |a_0|+(1-|a_0|^2)\sum_{n=1}^\infty \rho^n+ \left(\frac{1}{1+|a_0|}+\frac{\rho}{1-\rho}\right)(1-|a_0|^2)^2\sum_{n=1}^\infty \rho^{2n}\\
&&=|a_0|+\frac{(1-|a_0|^2)\rho}{1-\rho}+\left(\frac{1}{1+|a_0|}+\frac{\rho}{1-\rho}\right)\frac{(1-|a_0|^2)^2\rho^2}{1-\rho^2}.\eeas
Let $|a_0|=a\in[0,1]$. Then
\beas \sum_{n=0}^\infty \frac{|a_n|}{(1-\gamma)^n}\rho^n+\left(\frac{1}{1+|a_0|}+\frac{\rho}{1-\rho}\right)\sum_{n=1}^\infty\frac{|a_n|^2}{(1-\gamma)^{2n}}\rho^{2n}\leq 1+\xi(a),\eeas
where 
\beas&& \xi(a)=\frac{(1-a^2)\rho}{1-\rho}+\left(\frac{1}{1+a}+\frac{\rho}{1-\rho}\right)\frac{(1-a^2)^2\rho^2}{1-\rho^2}-(1-a)\\
&&=A(1-a^2)+B(1-a^2)(1-a)+C(1-a^2)^2-(1-a),\eeas
where $A=\rho/(1-\rho)\geq 0$, $B=\rho^2/(1-\rho^2)\geq 0$, $C=\rho^3/((1-\rho)(1-\rho^2))\geq 0$. Now
\beas&&\xi(0)=A+B+C-1\;\text{and}\;\xi(1)=0\\
 &&\xi'(a)=1-2aA+B(3a^2-2a-1)-4C(a-a^3)\\
&&\xi''(a)=-2A+B(6a-2)-4C(1-3a^2)\;\text{and}\;\xi'''(a)=6B+24aC\geq 0. \eeas
Therefore, $\xi''(a)$ is a monotonically increasing function of $a$ in $[0,1]$ and it follows that 
\beas\xi''(a)\leq \xi''(1)&=&-2A+4B+8C\\
&=&-\frac{2\rho}{1-\rho}+\frac{4\rho^2}{1-\rho^2}+\frac{8\rho^3}{(1-\rho)(1-\rho^2)}\\
&=&\frac{2\rho}{(1-\rho)(1-\rho^2)}\left(-1+\rho^2+2\rho(1-\rho)+4\rho^2\right)\\
&=&\frac{2\rho}{(1-\rho)(1-\rho^2)}(\rho+1)(3\rho-1)\leq 0\eeas
for $\rho\leq \rho_0=1/3$.
Therefore, $\xi''(a)\leq 0$ for $\rho\leq \rho_0$. Hence, $\xi'(a)$ is a monotonically decreasing function in $[0,1]$ and it follows that 
\beas \xi'(a)\geq \xi'(1)=1-2A=1-\frac{2\rho}{1-\rho}=\frac{1}{1-\rho}\left(1-3\rho\right)\geq 0\;\text{for}\;\rho\leq \rho_0=1/3.\eeas
Therefore, $\xi(a)$ is a monotonically increasing function in $[0,1]$ and hence $\xi(a)\leq \xi(1)=0$ for $a\in[0,1]$ and $\rho\leq \rho_0=1/3$.\\[2mm]
To prove the sharpness of the result, we consider the function $f_1(z)$ in $\Omega_\gamma$ such that $f_1=\psi\circ\Phi_1$, where $\Phi_1 : \Omega_{\gamma}\to\mathbb{D}$ defined by $\Phi_1(z)=\gamma+(1-\gamma)z$ and $\psi : \mathbb{D}\to\mathbb{D}$ defined by $\psi(z)=(a-z)/(1-az)$ for $a\in(0,1)$ and $\gamma\in[0,1)$. Therefore,
\beas f_1(z)=\frac{a-(1-\gamma)\left(z+\frac{\gamma}{1-\gamma}\right)}{1-a(1-\gamma)\left(z+\frac{\gamma}{1-\gamma}\right)}
=A_0-\sum_{n=1}^\infty A_n \left(z+\frac{\gamma}{1-\gamma}\right)^n\;\text{for}\;z\in\Omega_\gamma,\eeas	
where $A_0=a$ and $A_n=a^{n-1}(1-a^2)(1-\gamma)^n$.
Thus,
\beas &&S:=\sum_{n=0}^\infty \frac{|A_n|}{(1-\gamma)^n}\rho^n+\left(\frac{1}{1+|A_0|}+\frac{\rho}{1-\rho}\right)\sum_{n=1}^\infty\frac{|A_n|^2}{(1-\gamma)^{2n}}\rho^{2n}\\
&&=a+\frac{(1-a^2)}{a}\sum_{n=1}^\infty \left(a\rho\right)^n+\left(\frac{1}{1+a}+\frac{\rho}{1-\rho}\right)\frac{(1-a^2)^2}{a^2}\sum_{n=1}^\infty(a\rho)^{2n}\\
&&=a+\frac{(1-a^2)\rho}{1-a\rho}+\left(\frac{1}{1+a}+\frac{\rho}{1-\rho}\right)\frac{(1-a^2)^2\rho^2}{1-a^2\rho^2}\\
&&=1+\frac{(1-a^2)\rho}{1-a\rho}+\left(\frac{1}{1+a}+\frac{\rho}{1-\rho}\right)\frac{(1-a^2)^2\rho^2}{1-a^2\rho^2}-(1-a)\\
&&=1-\frac{(1-a)}{1-a\rho}\left(1-a\rho-(1+a)\rho-\left(\frac{1}{1+a}+\frac{\rho}{1-\rho}\right)\frac{(1-a^2)(1+a)\rho^2}{1+a\rho}\right)\\
&&=1-\frac{(1-a)}{1-a\rho}F_1(a, \rho),\eeas
where 
\beas F_1(a, \rho)=1-a\rho-(1+a)\rho-\left(\frac{1}{1+a}+\frac{\rho}{1-\rho}\right)\frac{(1-a^2)(1+a)\rho^2}{1+a\rho}.\eeas
By differentiating partially $F_1(a, \rho)$ with respect to $\rho$, we have
\beas \frac{\pa}{\pa \rho}F_1(a,\rho)=-(1+2a)-\left(\frac{1}{1+a}+\frac{\rho}{1-\rho}\right)\frac{(1-a^2)(1+a)(1+a\rho^2)}{(1+a\rho)^2}-\frac{(1-a^2)(1+a)\rho^2}{(1+a\rho)(1-\rho)^2}<0.\eeas
Thus, $F_1(a, \rho)$ is strictly decreasing function of $\rho\in(0,1)$. Therefore, for $\rho>\rho_0=1/3$, we have
\beas F_1(a,\rho)<F_1(a,\rho_0)=1-\frac{a}{3}-\frac{1+a}{3}-\left(\frac{1}{1+a}+\frac{1}{2}\right)\frac{(1-a^2)(1+a)}{3(3+a)}\to 0\;\text{as}\;a\to1.\eeas 
 Hence $S:=1-(1-a)F_1(a,\rho)/(1-a\rho)>1$ for $\rho>\rho_0$. 												
This shows that $\rho_0$ is best possible. This completes the proof.
\end{proof}
\begin{proof}[\bf{Proof of Theorem \ref{th1}}]  
Let $h(z)$ be analytic in $\Omega_{\gamma}$ with $|h(z)|\leq 1$. In view of \textrm{Lemma \ref{le2}}, we have $|a_n|\leq (1-\gamma)^n (1-|a_0|^2)$ for $n\geq 1$. Thus,
\bea\label{a1}\sum_{n=0}^\infty \frac{|a_n|}{(1-\gamma)^n}\rho^n\leq |a_0|+(1-|a_0|^2)\sum_{n=1}^\infty \rho^n=|a_0|+\frac{(1-|a_0|^2)\rho}{1-\rho}.\eea
In view of the fact that $|g’(z)|\leq k|h’(z)|$ on $\Omega_{\gamma}$, it follows from \textrm{Lemma \ref{le4}} that 
\beas  \sum_{n=1}^\infty \frac{|b_n|^2}{(1-\gamma)^{2n}}\rho^n&\leq& k^2 \sum_{n=1}^\infty \frac{|a_n|^2}{(1-\gamma)^{2n}}\rho^n
\leq \frac{k^2(1-|a_0|^2)^2\rho}{1-\rho}\eeas
for $\left|(1-\gamma)z+\gamma\right|=\rho<1$. Using Cauchy-Schwarz inequality, we have
\bea\label{a2}\sum_{n=1}^\infty \frac{|b_n|}{(1-\gamma)^n}\rho^n&\leq& \left(\sum_{n=1}^\infty \frac{|b_n|^2}{(1-\gamma)^{2n}}\rho^n\right)^{1/2}\left(\sum_{n=1}^\infty \rho^n\right)^{1/2}\nonumber\\
&\leq&\left(\frac{k^2(1-|a_0|^2)^2\rho}{1-\rho}\right)^{1/2}\left(\frac{\rho}{1-\rho}\right)^{1/2}= \frac{k(1-|a_0|^2)\rho}{(1-\rho)}.\eea
Let $|a_0|=a\in[0,1]$.
From (\ref{a1}) and (\ref{a2}), we have
\bea\label{a4}\sum_{n=0}^\infty \frac{|a_n|}{(1-\gamma)^n}\rho^n+\sum_{n=1}^\infty\frac{|b_n|}{(1-\gamma)^n}\rho^n\leq a+(1+k)\frac{(1-a^2)\rho}{1-\rho}=1+\xi(a),\eea
where 
\beas&& \xi(a)=(1+k)\frac{(1-a^2)\rho}{1-\rho}-(1-a)=\frac{1-a}{1-\rho}\left[(1+k)(1+a)\rho-(1-\rho)\right]\\
&&=\frac{1-a}{1-\rho}\left(((1+k)(1+a)+1)\rho-1\right).\eeas
It is easy to see that
\beas \xi(0)=\frac{(1+k)\rho}{1-\rho}-1, \xi(1)=0, \xi'(a)=1-\frac{2(1+k)a\rho}{1-\rho}\;\text{and}\;\xi''(a)=-\frac{2(1+k)\rho}{1-\rho}\leq 0.\eeas
Therefore, $\xi'(a)$ is a monotonically decreasing function of $a$ in $[0,1]$ and it follows that 
\beas\xi'(a)\geq \xi'(1)=1-\frac{2(1+k)\rho}{1-\rho}\geq 0\quad\text{for}\quad\rho\leq \rho_0=1/(2k+3).\eeas
Hence, $\xi(a)$ is a monotonically increasing function of $a$ for $\rho\leq \rho_0$ and $\gamma\in[0,1)$, thus $\xi(a)\leq \xi(1)=0$ for $\rho\leq \rho_0$.\\[2mm]
\indent To prove the sharpness of the result, we consider the function $f_2(z)=h_2(z)+\ol{g_2(z)}$ in $\Omega_\gamma$, where $h_2=\psi\circ\Phi_2$ with $\Phi_2 : \Omega_{\gamma}\to\mathbb{D}$ defined by $\Phi_2(z)=\gamma+(1-\gamma)z$ and $\psi : \mathbb{D}\to\mathbb{D}$ defined by $\psi(z)=(a-z)/(1-az)$ for $a\in(0,1)$ and $\gamma\in[0,1)$. Therefore,
\beas h_2(z)
=\frac{a-(1-\gamma)\left(z+\frac{\gamma}{1-\gamma}\right)}{1-a(1-\gamma)\left(z+\frac{\gamma}{1-\gamma}\right)}
=A_0-\sum_{n=1}^\infty A_n \left(z+\frac{\gamma}{1-\gamma}\right)^n\quad\text{for}\quad z\in\Omega_\gamma,\eeas	
where $A_0=a$, $A_n=a^{n-1}(1-a^2)(1-\gamma)^n$  and $g_2(z)=-k\sum_{n=1}^\infty A_n \left(z+\frac{\gamma}{1-\gamma}\right)^n$.
Thus 
\beas S:=\sum_{n=0}^\infty \frac{|a_n|}{(1-\gamma)^n}\rho^n+\sum_{n=1}^\infty\frac{|b_n|}{(1-\gamma)^n}\rho^n=A_0+\frac{(1-a^2)(1+k)}{a}\sum_{n=1}^\infty \left(a\rho\right)^n
=1-\frac{F_2(a,\rho)}{(1-a\rho)},\eeas
where 
\beas&& F_2(a,\rho)=(1-a)(1-a\rho)-(1-a^2)(1+k)\rho=(1-a)\left((1-a\rho)-(1+a)(1+k)\rho\right).\eeas
Let 
\bea\label{wre4} W(a,\rho)=(1-a\rho)-(1+a)(1+k)\rho.\eea 
It is easy to see that $S>1$ if $F_2(a,\rho)<0$ and $F_2(a,\rho)<0$ if $W(a,\rho)<0$. Allowing $a\to 1$ in (\ref{wre4}), we have
\beas W(1,\rho)=(1-\rho)-2(1+k)\rho=1-(2k+3)\rho<0\;\text{for}\;\rho>\rho_0=1/(2k+3).\eeas
This shows that $\rho_0$ is best possible.
 This completes the proof.
\end{proof}
\begin{proof}[\bf{Proof of Theorem \ref{th2}}]
Let $h(z)$ be analytic on $\Omega_{\gamma}$ with $|h(z)|\leq 1$. In view of \textrm{Lemma \ref{le2}}, we have $|a_n|\leq (1-\gamma)^n (1-|a_0|^2)$ for $n\geq 1$. Thus,
\bea\label{aa1}\sum_{n=0}^\infty \frac{|a_n|}{(1-\gamma)^n}\rho^n\leq |a_0|+(1-|a_0|^2)\sum_{n=1}^\infty \rho^n=|a_0|+\frac{(1-|a_0|^2)\rho}{1-\rho}.\eea
Using similar argument as in the proof of \textrm{Theorem \ref{th1}} and by using \textrm{Lemma \ref{le4}} with $|g'(z)|\leq k|h'(z)|$, we have
\bea\label{aa2} \sum_{n=1}^\infty \frac{|b_n|}{(1-\gamma)^n}\rho^n\leq \frac{k(1-|a_0|^2)\rho}{(1-\rho)}\;\;\text{for}\;\left|z(1-\gamma)+\gamma\right|=\rho.\eea
We consider the function $\Phi :\mathbb{D}\to \Omega_{\gamma}$ defined by $\Phi(z)=(z-\gamma)/(1-\gamma)$. Since $h: \Omega_{\gamma}\to\ol{\mathbb{D}}$, so
the composition $\varphi_1=h\circ \Phi$ is analytic in $\mathbb{D}$. 
Since $z\in\Omega_{\gamma}$, so we write $z=(\xi-\gamma)/(1-\gamma)$ for $\xi\in\mathbb{D}$. 
Thus, we have 
\beas\varphi(\xi)=h\left(\frac{\xi-\gamma}{1-\gamma}\right)=\sum\limits_{n=0}^\infty \frac{a_n}{(1-\gamma)^n}\xi^n=\sum\limits_{n=0}^\infty d_n\xi^n\;\text{for}\;\xi\in\mathbb{D},\eeas
where $d_n=a_n/(1-\gamma)^n$ for $n\geq 0$. It is known that for arbitrary entire function $G(z)=\sum_{n=0}^\infty \alpha_n z^n$ for $z\in\mathbb{D}$, the area functional is given by
\beas \frac{S_\rho}{\pi}=\frac{1}{\pi}\text{Area}\;\left(G(\mathbb{D}(0;\rho))\right)=\frac{1}{\pi}\iint_{|z|<\rho}|G'(z)|dx dy=\sum_{n=1}^\infty n|\alpha_n|^2\rho^{2n}.\eeas
Therefore, in view of \textrm{Lemma \ref{le2}}, we have  
\beas \frac{1}{\pi}\text{Area}\;\left(\varphi(\mathbb{D}(0;\rho))\right)=\sum_{n=1}^\infty n|d_n|^2\rho^{2n}=\sum_{n=1}^\infty \frac{n|a_n|^2}{(1-\gamma)^{2n}}\rho^{2n}\leq (1-|a_0|^2)^2\frac{\rho^2}{(1-\rho^2)^2}.\eeas
It is easy to see that 
\beas \text{Area}\left(\varphi(\mathbb{D}(0;\rho))\right)=\text{Area}\left(h\left(\Phi(\mathbb{D}(0;\rho)\right))\right)=\text{Area}\left[h\left(\mathbb{D}(\gamma/(\gamma-1);\rho/(1-\gamma))\right)\right]= S_\rho^\gamma(h).\eeas
Let $|a_0|=a\in[0,1]$. From (\ref{aa1}), (\ref{aa2}) and (\ref{a4}), we have 
\bea\label{a4}&&\sum_{n=0}^\infty \frac{|a_n|}{(1-\gamma)^n}\rho^n+\sum_{n=1}^\infty\frac{|b_n|}{(1-\gamma)^n}\rho^n+K\frac{S_\rho^\gamma(h)}{\pi}\nonumber\\
&&\leq a+(1+k)\frac{(1-a^2)\rho}{1-\rho}+K(1-a^2)^2\frac{\rho^2}{(1-\rho^2)^2}=1+\xi(\rho),\eea
where
\beas \xi(\rho)&=&\frac{(1+k)(1-a^2)\rho}{1-\rho}+K(1-a^2)^2\frac{\rho^2}{(1-\rho^2)^2}-(1-a)\\
&=&\frac{(1-a^2)}{2}\left(\frac{2(1+k)\rho}{1-\rho}+2K(1-a^2)\frac{\rho^2}{(1-\rho^2)^2}-\frac{2}{1+a}\right)\\
&=&\frac{(1-a^2)}{2}\left(1+\frac{2K(1-a^2)\rho^2}{(1-\rho^2)^2}+\left(\frac{2(1+k)\rho}{1-\rho}-1\right)-\frac{2}{1+a}\right).\eeas
A simple computation shows that
\beas \xi'(\rho)&=&\frac{(1+k)(1-a^2)}{\left(1-\rho\right)^2}+K(1-a^2)^2\frac{2\rho(1+\rho^2)}{(1-\rho^2)^3}>0.\eeas
Therefore, $\xi(\rho)$ is an increasing function, and hence $\xi(\rho)\leq \xi(\rho_0)$ for $\rho\leq \rho_0=1/(2k+3)$, where $k\in[0,1)$.
We note that
\beas \xi(\rho_0)=\frac{(1-a^2)}{2}\left(1+\frac{2K(1-a^2)(2k+3)^2}{(2k+4)^2(2k+2)^2}-\frac{2}{1+a}\right).\eeas
Let \beas\Phi(x)=1+\frac{2K(1-x^2)(2k+3)^2}{(2k+4)^2(2k+2)^2}-\frac{2}{1+x},\;x\in[0,1].\eeas 
It is evident that 
\beas&& \Phi(0)=\frac{2K(2k+3)^2}{(2k+4)^2(2k+2)^2}-1,\;\lim_{x\to 1^{-}}\Phi(x)=0\\\text{and}
&&\Phi'(x)=-\frac{4K(2k+3)^2x}{(2k+4)^2(2k+2)^2}+\frac{2}{(1+x)^2}\\
&&=\frac{2}{(1+x)^2}\left(1-\frac{2K(2k+3)^2}{(2k+4)^2(2k+2)^2}x(1+x)^2\right).\eeas
As $x\in[0,1]$, we have 
\beas\Phi'(x)\geq \frac{2}{(1+x)^2}\left(1-\frac{8K(2k+3)^2}{(2k+4)^2(2k+2)^2}\right)\geq 0\;\text{if}\;K\leq \frac{(2k+4)^2(2k+2)^2}{8(2k+3)^2}.\eeas 
Therefore, $\Phi(x)$ is an increasing function on $[0,1]$ for $K\leq(2k+4)^2(2k+2)^2/(8(2k+3)^2)$. Hence, $\Phi(x)\leq 0$ for $x\in[0,1]$, $k\in[0,1)$ and $K\leq (2k+4)^2(2k+2)^2/(8(2k+3)^2)$.\\[2mm]
\indent In order to prove the sharpness of the result, we consider the function $f_3(z)=h_3(z)+\ol{g_3(z)}$ in $\Omega_\gamma$ with $h_3=\psi\circ\Phi_3$ maps $\Omega_\gamma$ univalently 
on $\mathbb{D}$, where $\Phi_3 : \Omega_{\gamma}\to\mathbb{D}$ defined by $\Phi_3(z)=\gamma+(1-\gamma)z$ and $\psi : \mathbb{D}\to\mathbb{D}$ defined by 
$\psi(z)=(a-z)/(1-az)$ for $a\in(0,1)$ and $\gamma\in[0,1)$. For $z\in\Omega_\gamma$, we have
\beas h_3(z)=\psi\left(\gamma+(1-\gamma)z\right)
=\frac{a-(1-\gamma)\left(z+\frac{\gamma}{1-\gamma}\right)}{1-a(1-\gamma)\left(z+\frac{\gamma}{1-\gamma}\right)}
=A_0-\sum_{n=1}^\infty A_n \left(z+\frac{\gamma}{1-\gamma}\right)^n,\eeas	
where $A_0=a$ and $A_n=a^{n-1}(1-a^2)(1-\gamma)^n$ and $g_3(z)=-k\sum_{n=1}^\infty A_n\left(z+\frac{\gamma}{1-\gamma}\right)^n$.
Thus,  
\bs\beas S:&=&\sum_{n=0}^\infty \frac{|a_n|}{(1-\gamma)^n}\rho^n+\sum_{n=1}^\infty\frac{|b_n|}{(1-\gamma)^n}\rho^n+\frac{(2k+4)^2(2k+2)^2}{8(2k+3)^2}\frac{S_\rho^\gamma(h)}{\pi}\\
&=&A_0+\frac{(1-a^2)(1+k)}{a}\sum_{n=1}^\infty\left(a\rho\right)^n+\frac{(2k+4)^2(2k+2)^2}{8(2k+3)^2}\sum_{n=1}^\infty n|A_n|^2\left(\frac{\rho}{1-\gamma}\right)^{2n}\\
&=&1+(1-a)F_3(a, \rho),\eeas\es
where 
\beas F_3(a, \rho)=\frac{(1+a)(1+k)\rho}{1-a\rho}+\frac{(2k+4)^2(2k+2)^2}{8(2k+3)^2}\frac{(1-a)(1+a)^2\rho^2}{(1-a^2\rho^2)^2}-1.\eeas
Differentiating partially $F_3(a,\rho)$ with respect to $\rho$, we have
\beas \frac{\pa}{\pa \rho} F_3(a,\rho)
=\frac{(1+a)(1+k)}{(1-a\rho)^2}+\frac{(2k+4)^2(2k+2)^2}{4(2k+3)^2}\frac{(1-a)(1+a)^2\rho(1+a^2\rho^2)}{(1-a^2\rho^2)^3}> 0\eeas
 for $\rho\in(0,1)$. Therefore, $F_3(a,\rho)$ is a strictly increasing function of $\rho$ in $(0,1)$. Thus, for $\rho>\rho_0=1/(2k+3)$, we have 
\beas F_3(a,\rho)&>&F_3(a,\rho_0)\\&=&\frac{(1+a)(1+k)}{3+2k-a}+\frac{(2k+4)^2(2k+2)^2}{8(2k+3)^2}\frac{(1-a)(1+a)^2(3+2k)^2}{(3+2k+a)^2(3+2k-a)^2}-1.\eeas
It is evident that $F_3(a,\rho_0)\to 0$ as $a\to 1$. Hence, $1+(1-a)F_3(a,\rho)>1$ for $\rho>\rho_0$. 												
This shows that $\rho_0$ is the best possible. This completes the proof.
\end{proof}
\begin{proof}[\bf{Proof of Theorem \ref{th3}}] Let $h(z)$ be analytic on $\Omega_{\gamma}$ with $|h(z)|\leq 1$. In view of \textrm{Lemma \ref{le2}}, we have $|a_n|\leq (1-\gamma)^n (1-|a_0|^2)$ for $n\geq 1$. Thus,
\bea\label{a5}\sum_{n=0}^\infty \frac{|a_n|}{(1-\gamma)^n}\rho^n\leq |a_0|+\frac{(1-|a_0|^2)\rho}{1-\rho}.\eea
By a similar argument as in the proof of \textrm{Theorem \ref{th1}}, \textrm{Lemma \ref{le4}} and $|g'(z)|\leq k|h'(z)|$, we have 
\bea\label{a6} \sum_{n=1}^\infty \frac{|b_n|^2}{(1-\gamma)^n}\rho^n\leq \frac{k(1-|a_0|^2)\rho}{(1-\rho)}\;\;\text{for}\;\left|z(1-\gamma)+\gamma\right|=\rho.\eea
In view of \textrm{Lemma \ref{le4}} and the condition $|g'(z)|\leq k|h'(z)|$ gives that for $\left|(1-\gamma)z+\gamma\right|=\rho$, 
\bea\label{a7}&& \sum_{n=1}^\infty n\frac{|b_n|^2}{(1-\gamma)^{2n}}\rho^{2n}\leq k^2 \sum_{n=1}^\infty n\frac{|a_n|^2}{(1-\gamma)^{2n}}\rho^{2n}\\[2mm]\text{and}
&&\sum_{n=1}^\infty n\frac{|b_n|}{(1-\gamma)^n}\rho^{2n}\leq\left(\sum_{n=1}^\infty n\frac{|b_n|^2}{(1-\gamma)^{2n}}\rho^{2n}\right)^{1/2}\left( \sum_{n=1}^\infty n\rho^{2n}\right)^{1/2}\nonumber\\[2mm]
\label{a8}&\leq& \left(k^2\sum_{n=1}^\infty n\frac{|a_n|^2}{(1-\gamma)^{2n}}\rho^{2n}\right)^{1/2}\left( \frac{\rho^2}{(1-\rho^2)^2}\right)^{1/2}
\leq \frac{k\left(1-|a_0|^2\right)\rho^2}{(1-\rho^2)^2}.\eea
It is well known that, for analytic functions $\varphi_1(z)=\sum_{n=1}^\infty a_nz^n$ and $\varphi_2(z)=\sum_{n=1}^\infty b_nz^n$ with $\phi(z)=\varphi_1(z)+\ol{\varphi_2(z)}$, the 
area $S_\rho$ of the image $\phi\left(|z|<\rho\right)$ (see \cite[Chapter 7]{20P}) is given by
\beas\frac{S_\rho}{\pi}&=&\iint\limits_{\substack{|z|<\rho}}\left(|\varphi_1'(z)|^2-|\varphi_2'(z)|^2\right)dx dy=\sum_{n=1}^\infty n\left(|a_n|^2-|b_n|^2\right)\rho^{2n}.\eeas
Since $z\in\Omega_{\gamma}$, so we write $z=(\xi-\gamma)/(1-\gamma)$ for $\xi=\xi_1+i\xi_2\in\mathbb{D}$ and the Jacobian $J_f$ of $f$ is given by 
\beas J_f \left(\frac{\xi-\gamma}{1-\gamma}\right) = \left|h'\left(\frac{\xi-\gamma}{1-\gamma}\right)\right|^2 - \left|g'\left(\frac{\xi-\gamma}{1-\gamma}\right)\right|^2.\eeas Therefore,
\beas h\left(\frac{\xi-\gamma}{1-\gamma}\right)=\sum\limits_{n=0}^\infty \frac{a_n}{(1-\gamma)^n}\xi^n\;\text{and}\;g\left(\frac{\xi-\gamma}{1-\gamma}\right)=\sum\limits_{n=0}^\infty \frac{b_n}{(1-\gamma)^n}\xi^n\;\text{for}\;\xi\in\mathbb{D}.\eeas
Thus, the area of the image $f\left(|\xi|<\rho\right)$, {\it i.e.}, $f\left(|z+\gamma/(1-\gamma)|<\rho/(1-\gamma)\right)$ is given by
\bea\label{a9}\frac{S_\rho^\gamma(f)}{\pi}&=&\frac{1}{(1-\gamma)^2}\iint\limits_{\substack{|\xi|<\rho}}\left(\left|h'\left(\frac{\xi-\gamma}{1-\gamma}\right)\right|^2-\left|g'\left(\frac{\xi-\gamma}{1-\gamma}\right)\right|^2\right)d\xi_1 d\xi_2\nonumber\\[2mm]
&=&\frac{1}{(1-\gamma)^2}\sum_{n=1}^\infty n\left(\frac{|a_n|^2}{(1-\gamma)^{2n}}-\frac{|b_n|^2}{(1-\gamma)^{2n}}\right)\rho^{2n}\nonumber\\[2mm]
&=&\frac{1}{(1-\gamma)^2}\sum_{n=1}^\infty n\left(\frac{|a_n|}{(1-\gamma)^n}+\frac{|b_n|}{(1-\gamma)^n}\right)\left(\frac{|a_n|}{(1-\gamma)^n}-\frac{|b_n|}{(1-\gamma)^n}\right)\rho^{2n}\nonumber\\[2mm]
&\leq&\frac{1}{(1-\gamma)^2} \sum_{n=1}^\infty n\frac{|a_n|}{(1-\gamma)^n}\left(\frac{|a_n|}{(1-\gamma)^n}+\frac{|b_n|}{(1-\gamma)^n}\right)\rho^{2n}\nonumber\\[2mm]
&\leq&\frac{(1-|a_0|^2)^2}{(1-\gamma)^2}\left(1+k\right)\sum_{n=1}^\infty n\rho^{2n}=\frac{(1+k)(1-|a_0|^2)^2}{(1-\gamma)^2}\frac{\rho^2}{(1-\rho^2)^2}.\eea
Let $|a_0|=a\in[0,1]$. From (\ref{a5}), (\ref{a6}) and (\ref{a9}), we have 
\beas &&\sum_{n=0}^\infty|a_n|\rho^n+\sum_{n=1}^\infty|b_n|\rho^n+K\left(\frac{S_\rho^\gamma(f)}{\pi}\right)\\
&&\leq a+(1+k)\frac{(1-a^2)\rho}{1-\rho}+K\frac{(1+k)(1-a^2)^2}{(1-\gamma)^2}\frac{\rho^2}{(1-\rho^2)^2}\leq 1+\Phi(\rho),\eeas
where 
\beas &&\Phi(\rho)=(1+k)\frac{(1-a^2)\rho}{1-\rho}+K\frac{(1+k)(1-a^2)^2}{(1-\gamma)^2}\frac{\rho^2}{(1-\rho^2)^2}-(1-a)\\
&&=\frac{1-a^2}{2}\left(1+\frac{2K(1+k)\left(1-a^2\right)}{(1-\gamma)^2}\frac{\rho^2}{(1-\rho^2)^2}+\left(\frac{2(1+k)\rho}{1-\rho}-1\right)-\frac{2}{1+a}\right).\eeas
Note that 
\beas\Phi'(\rho)= (1+k)\frac{(1-a^2)}{(1-\rho)^2}+K\frac{(1+k)\left(1-a^2\right)^2}{(1-\gamma)^2}\frac{2\rho(1+\rho^2)}{(1-\rho^2)^3}>0.\eeas
Therefore, $\Phi(\rho)$ is an increasing function and $\Phi(\rho)\leq \Phi(\rho_0)$ for $\rho\leq \rho_0=1/(2k+3)$.
To prove that $\Phi(\rho)\leq 0$, it suffices to show that $\Phi(\rho_0)\leq 0$ for all $a\leq 1$. Now
\beas \Phi(\rho_0)=\frac{1-a^2}{2}\left(1+\frac{2K(1+k)\left(1-a^2\right)}{(1-\gamma)^2}\frac{(2k+3)^2}{(2k+4)^2(2k+2)^2}-\frac{2}{(1+a)}\right).\eeas
Let 
\beas F(x)=1+\frac{2K(1+k)(2k+3)^2}{(1-\gamma)^2(2k+4)^2(2k+2)^2}\left(1-x^2\right)-\frac{2}{1+x}\quad\text{for}\; x\in[0,1].\eeas
It is enough to show that $F(x)\leq 0$ for $x\in[0,1]$, so that $\Phi(\rho_0)\leq 0$. It is easy to see that
\beas F(0)=\frac{2K(1+k)(2k+3)^2}{(1-\gamma)^2(2k+4)^2(2k+2)^2}-1\;\text{and}\;\lim_{x\to 1^{-}}F(x)=0.\eeas
By differentiating $F(x)$ with respect to $x$, we have
\beas &&F'(x)=-\frac{4K(1+k)(2k+3)^2x}{(1-\gamma)^2(2k+4)^2(2k+2)^2}+\frac{2}{(1+x)^2}\\[2mm]
&&=\frac{2}{(1+x)^2}\left(1-\frac{2K(1+k)(2k+3)^2}{(1-\gamma)^2(2k+4)^2(2k+2)^2}x(1+x)^2\right)\\[2mm]
&&\geq\frac{2}{(1+x)^2}\left(1-\frac{8K(1+k)(2k+3)^2}{(2k+4)^2(2k+2)^2}\right)\geq 0\;\text{if}\; K\leq \frac{(2k+4)^2(2k+2)^2}{8(1+k)(2k+3)^2}.\eeas
Therefore, $F(x)$ is a monotonically increasing function of $x\in[0,1]$ if $K\leq (2k+4)^2(2k+2)^2/(8(1+k)(2k+3)^2)$. Hence $F(x)\leq 0$ for $x\in[0,1]$. \\[2mm]
\indent To prove the sharpness of the result, we consider $f_4(z)=h_4(z)+\ol{g_4(z)}$ in $\Omega_{\gamma}$, where $h_4=\psi\circ\Phi_4$ with $\Phi_4 : \Omega_{\gamma}\to\mathbb{D}$ defined by $\Phi_4(z)=\gamma+(1-\gamma)z$ and $\psi : \mathbb{D}\to\mathbb{D}$ defined by $\psi(z)=(a-z)/(1-az)$ for $a\in(0,1)$ and $\gamma\in[0,1)$. Thus
\beas&& h_4(z)=\psi\left(\gamma+(1-\gamma)z\right)
=\frac{a-(1-\gamma)\left(z+\frac{\gamma}{1-\gamma}\right)}{1-a(1-\gamma)\left(z+\frac{\gamma}{1-\gamma}\right)}\\
&&=A_0-\sum_{n=1}^\infty A_n \left(z+\frac{\gamma}{1-\gamma}\right)^n\;\text{for}\;z\in\Omega_\gamma,\eeas	
where $A_0=a$, $A_n=a^{n-1}(1-a^2)(1-\gamma)^n$ and $g_4(z)=-k\sum_{n=1}^\infty A_n\left(z+\frac{\gamma}{1-\gamma}\right)^n$.
Therefore
\bs\beas S:&=&\sum_{n=0}^\infty \frac{|a_n|}{(1-\gamma)^n}\rho^n+\sum_{n=1}^\infty\frac{|b_n|}{(1-\gamma)^n}\rho^n+\frac{(2k+4)^2(2k+2)^2}{8(2k+3)^2}\frac{S_\rho^\gamma(h)}{\pi}\\
&=&A_0+\frac{(1-a^2)(1+k)}{a}\sum_{n=1}^\infty\left(a\rho\right)^n\\
&&+\frac{(2k+4)^2(2k+2)^2}{8(2k+3)^2}\frac{1}{(1-\gamma)^2}\sum_{n=1}^\infty n\left(\frac{|A_n|^2}{(1-\gamma)^{2n}}-\frac{k^2|A_n|^2}{(1-\gamma)^{2n}}\right)\rho^{2n}\\
&=&1+(1-a)F_4(\rho),\eeas\es
where 
\beas F_4(\rho)=\frac{(1+a)(1+k)\rho}{1-a\rho}+\frac{(2k+4)^2(2k+2)^2(1-k^2)}{8(2k+3)^2(1-\gamma)^2}\frac{(1-a)(1+a)^2 \rho^2}{(1-a^2\rho^2)^2}-1.\eeas
Then, we have  
\beas F_4'(\rho)&=&\frac{(1+a)(1+k)}{(1-a\rho)^2}+\frac{(2k+4)^2(2k+2)^2(1-k^2)}{8(2k+3)^2(1-\gamma)^2}\frac{2(1-a)(1+a)^2\rho(1+a^2\rho^2)}{(1-a^2\rho^2)^3}\\
&=&\frac{(1+a)(1+k)}{(1-a\rho)^2}+\frac{(2k+4)^2(2k+2)^2(1-k^2)(1-a)(1+a)^2\rho(1+a^2\rho^2)}{4(2k+3)^2(1-\gamma)^2(1-a^2\rho^2)^3}> 0\eeas
for $\rho\in(0,1)$. Therefore $F_4(\rho)$ is a strictly increasing function of $\rho$ in $(0,1)$. Thus, for $\rho>\rho_0=1/(2k+3)$, we have 
\beas F_4(\rho)>F_4(\rho_0)=\frac{(1+a)(1+k)}{3+2k-a}+\frac{(2k+4)^2(2k+2)^2(1-k^2)(1-a)(1+a)^2}{8(1-\gamma)^2(3+2k+a)^2(3+2k-a)^2}-1.\eeas
Clearly, $F_4(\rho_0)\to 0$ as $a\to 1$. Hence, $1+(1-a)F_4(\rho)>1$ for $\rho>\rho_0$. 												
This shows that $\rho_0$ is best possible. This completes the proof. \end{proof}
\section{Declarations}
\noindent{\bf Acknowledgment:} The work of the second author is supported by the University Grants Commission (IN) fellowship (No. F. 44- 1/2018 (SA- III)).\\[1mm]
{\bf Conflict of Interest:} The authors declare that there are no conflicts of interest regarding the publication of this paper.\\[1mm]
{\bf Availability of data and materials:} Data sharing is not applicable to this article as no data sets were generated or analyzed during the current study.

\end{document}